 \newtheorem{thm}{Theorem}[section]
 \newtheorem{cor}[thm]{Corollary}
 \newtheorem{lem}[thm]{Lemma}
 \newtheorem{fact}[thm]{Fact}
 \theoremstyle{definition}
 \theoremstyle{remark}
 \numberwithin{equation}{section}
\newcommand*{\vimage}[1]{\vcenter{\hbox{\includegraphics[width=.45\textwidth]{#1}}}}
\newcommand*{\vpointer}{\vcenter{\hbox{\scalebox{2}{\Huge\pointer}}}}
\DeclareMathOperator{\Prox}{Prox}
\DeclareMathOperator*{\argmin}{arg\,min}
\newcommand{\ran}{\mathbf{ran}}
\newcommand{\cone}{\mathbf{cone}}
\newcommand{\intr}{\mathbf{int}}
\newcommand{\cH}{\mathcal{H}}
\title[Cosmic Divergence]
 {Cosmic Divergence, Weak Cosmic Convergence,
 and Fixed Points at Infinity}
\begin{document}

\author[E. K. Ryu]{Ernest K. Ryu}

\address{%
7324 Mathematical Sciences,\\
Los Angeles, CA 90095\\
United States}

\email{eryu@math.ucla.edu}

\subjclass{Primary 47H09; Secondary 90C25.}

\keywords{Cosmic convergence, non-expansive
mapping, Convex optimization, Weak convergence, Minimal displacement vector}
\date{XXX 2017}

\begin{abstract}
To characterize the asymptotic behavior of
fixed-point iterations of non-expansive
operators with no fixed points,
Bauschke et al.\
[J.\ Fixed Point Theory Appl.\ \textbf{18:2} (2016), 297--307]
recently studied cosmic convergence and conjectured
that cosmic convergence always holds.
This paper 
presents a cosmically divergent counter example,
which disproves this conjecture.
This paper also demonstrates, with a counter example,
that cosmic convergence can be weak in infinite dimensions.
Finally, this paper
shows positive results relating to 
cosmic convergence
that provide an interpretation
of cosmic accumulation points
as fixed points at infinity.
\end{abstract}

\maketitle
\section{Introduction}
Let $\cH$ be a
(finite or infinite dimensional) Hilbert space with an inner product
$\langle\cdot,\cdot\rangle$ and induced norm $\|\cdot\|$.
Let
$T:\cH\rightarrow \cH$ be a non-expansive operator \emph{with no fixed point}.
Consider the ``fixed-point'' iteration
\begin{equation}
x^{k+1}=T(x^k)
\label{eq:fixed-iter}
\end{equation}
for some starting point $x^0\in \mathcal{H}$,
even though $T$ has no fixed point.
Since $T$ has no fixed point, 
$\{x^k\}_{k\ge0}$ diverges
in that
$\|x^k\|\rightarrow \infty$
\cite[Corollary~6(c)]{Pazy1971}.

Define
\[
\ran(I-T)=\{x-T(x)\,|\,x\in \cH\},
\]
and write $\overline{\ran(I-T)}$ for the closure of $\ran(I-T)$.
Define the \emph{minimal displacement vector} of $T$ as
\[
v=P_{\overline{\ran(I-T)}}(0)=
\argmin_{x\in \overline{\ran(I-T)}}\|x\|,
\]
which exists and is unique \cite[Lemma~4]{Pazy1971}.
Pazy, in 1971 \cite[Theorem~2]{Pazy1971}, 
elegantly characterized the asymptotic behavior of iteration \eqref{eq:fixed-iter} by showing
\[
x^k=-kv +o(k)
\]
as $k\rightarrow\infty$.
Furthermore, 
 Baillon et al., in 1978 \cite[Corollary~2.3]{BaillonBruckReich1978_asymptotic},
showed 
\[
x^{k+1}- x^k\rightarrow-v
\]
as $k\rightarrow\infty$.
When $v\ne 0$, we have $\|x^k\|=O(k)$, and these results provide a very concrete understanding
of the manner in which $x^k$ diverges to infinity.
%
%

When $v=0$,
Pazy's result implies 
$x^{k}=o(k)$ without any characterization of direction
and
$\|x^k\|\rightarrow\infty$ at the slower rate of $\|x^k\|=o(k)$ as $k\rightarrow\infty$.
This is less concrete and less satisfactory. While there has been some work since Pazy
studying this case \cite{lins2009,gaubert2012,Bauschke2016}, 
our understanding is still incomplete.

In this paper, we focus on the case $v=0$;
unless stated otherwise, assume
$T:\cH\rightarrow\cH$
is non-expansive and
$0\in \overline{\ran(I-T)}\backslash \ran(I-T)$.

We say $q$ is the weak \emph{cosmic limit} of 
$\{x^k\}_{k\ge 0}\subset \cH$ if
$\|x^k\|\rightarrow\infty$ and
\[
(1/\|x^k\|)x^k\rightharpoonup q,
\]
where $\rightharpoonup$ denotes weak convergence.
We call the weak accumulation points of 
$(1/\|x^k\|)x^k$
the weak \emph{cosmic accumulation points}.
The strong cosmic limit and strong cosmic accumulation
points are defined similarly.

As a way to understand the asymptotic behavior
of the $\{x^k\}_{k\ge 0}$ sequence of 
\eqref{eq:fixed-iter},
Bauschke et al.\ \cite{Bauschke2016}
studied the cosmic accumulation points
in finite dimensional Hilbert spaces
and postulated the \emph{cosmic convergence conjecture},
which states $\{x^k\}_{k\ge 0}$ always
cosmically converges.

\subsection{Contribution and organization of this paper}
This paper furthers the study of cosmic
convergence and cosmic accumulation points
and thereby furthers the understanding
of the
asymptotic behavior
of the $\{x^k\}_{k\ge 0}$ sequence of 
\eqref{eq:fixed-iter}
for
non-expansive operators $T$ such that 
$0\in \overline{\ran(I-T)}\backslash \ran(I-T)$.

Section~\ref{s:prelim} sets up basic preliminary
information.

Section~\ref{s:count1}
presents a counter example to the cosmic convergence
conjecture. There is a (firmly) non-expansive operator $T:\mathbb{R}^2\rightarrow \mathbb{R}^2$
such that its fixed-point iteration
sequence 
does not have a cosmic limit.
This shows that the cosmic convergence conjecture is false
without any further assumptions.

Section~\ref{s:count2}
presents a non-expansive operator
on an infinite dimensional Hilbert space
for which its fixed-point iteration
sequence
cosmically converges
to $0$ weakly.
This example illustrates the additional complexity
of
analyzing cosmic convergence in infinite dimensions.

Section~\ref{s:pos}
presents positive results
towards characterizing cosmic accumulation points.
In particular, 
we show that a non-zero cosmic limit
defines a separating hyperplane
between $\overline{\ran(I-T)}$ and $\{0\}$
and can be interpreted as a fixed point at infinity.
Section~\ref{ss:existing-work}
relates the presented results to
the works of Lins
\cite{lins2009}
Gaubert and Vigeral 
\cite{gaubert2012}, 
and 
Bauschke et al.\
\cite{Bauschke2016}.

\section{Preliminaries}
\label{s:prelim}





Write $\rightharpoonup$ and $\rightarrow$
for weak and strong convergence, respectively.
Given a sequence $\{q^k\}_{k\ge 0}\subset \cH$, we say $q$ is a weak
accumulation point if there is a subsequence $\{q^{k_j}\}_{j\ge 0}$
such that
$q^{k_j}\rightharpoonup q$.
A strong accumulation point is defined similarly.

\begin{lem}
\label{lem:cosmic-acc}
Assume two sequences $\{x^k\}_{k\ge 0}\subset \mathcal{H}$ and $\{y^k\}_{k\ge 0}\subset \mathcal{H}$ 
satisfy 
\[
\|x^k\|\rightarrow\infty,\quad\|y^k\|\rightarrow\infty
\]
as $k\rightarrow\infty$ and
\[
\|x^k-y^k\|\le C
\]
for all $k=0,1,2,\dots$ for some $C\ge 0$.
Then
$\{(1/\|x^k\|)x^k\}_{k\ge 0}$ and \\
$\{(1/\|y^k\|)y^k\}_{k\ge 0}$
have the same (weak or strong) accumulation points.
\end{lem}
\begin{proof}
The reverse triangle inequality tells us
\begin{align*}
\left|\|x^k\|-\|y^k\|\right|
&\le \|x^k-y^k\|\le C.
\end{align*}
Dividing by $\|y^k\|$ (which is non-zero for large $k$)
we get
\[
\left|\|x^k\|/\|y^k\|-1\right|
\le C/\|y^k\|.
\]
Since $\|y^k\|\rightarrow \infty$ as $k\rightarrow\infty$,
we conclude that $\|x^k\|/\|y^k\|\rightarrow 1$.

Now we have
\begin{align*}
&\left\|
(1/\|x^k\|)x^k
-
(1/\|y^k\|)y^k
\right\|^2\\
 &\quad=
(1/\|x^k\|^2)
\left\|
x^k
-
y^k+y^k-
(\|x^k\|/\|y^k\|)y^k
\right\|^2\\
&\quad\le 
(2/\|x^k\|^2)
\left\|
x^k
-
y^k\|^2+
(2/\|x^k\|^2)\|y^k-
(\|x^k\|/\|y^k\|)y^k
\right\|^2
\\
&\quad\le 
(2/\|x^k\|^2)
C^2+
(2\|y^k\|^2/\|x^k\|^2)\left(1-\|x^k\|/\|y^k\|\right)^2
\\
&\quad\rightarrow 0.
\end{align*}
Thus we conclude 
$\{(1/\|x^k\|)x^k\}_{k\ge 0}$ and  $\{(1/\|y^k\|)y^k\}_{k\ge 0}$
have the same accumulation points.
\end{proof}

\begin{lem}
\label{lem:well-def}
Consider the fixed-point iteration \eqref{eq:fixed-iter} and its sequence $\{x^k\}_{k\ge 0}$.
The set of weak accumulation points of
$ \{(1/\|x^k\|)x^k\}_{k\ge 0}$
is the same irrespective of the starting point $x^0\in \mathcal{H}$.
\end{lem}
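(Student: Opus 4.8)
The plan is to reduce this to Lemma~\ref{lem:cosmic-acc} by comparing the iteration started from an arbitrary $x^0$ with the iteration started from a second arbitrary point $y^0$. Let $\{x^k\}_{k\ge 0}$ and $\{y^k\}_{k\ge 0}$ be the two fixed-point iteration sequences generated by the same operator $T$ from these two starting points. The goal is to verify that this pair satisfies both hypotheses of Lemma~\ref{lem:cosmic-acc}, namely that both norms diverge and that the two sequences stay within a bounded distance of each other; once that is done, the conclusion is immediate.

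First I would establish the bounded-distance hypothesis. Since $T$ is non-expansive, for every $k$ we have
\[
\|x^{k+1}-y^{k+1}\|=\|T(x^k)-T(y^k)\|\le \|x^k-y^k\|,
\]
so the sequence $\{\|x^k-y^k\|\}_{k\ge 0}$ is non-increasing and hence bounded by the constant $C=\|x^0-y^0\|$. This gives the uniform bound $\|x^k-y^k\|\le C$ required by Lemma~\ref{lem:cosmic-acc}.

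Next I would handle the divergence hypothesis. Under the standing assumption $0\in\overline{\ran(I-T)}\backslash\ran(I-T)$, the operator $T$ has no fixed point (a fixed point would place $0$ in $\ran(I-T)$), so by Pazy's result \cite[Corollary~6(c)]{Pazy1971} cited in the introduction, any fixed-point iteration sequence satisfies $\|x^k\|\rightarrow\infty$. Applying this to both starting points yields $\|x^k\|\rightarrow\infty$ and $\|y^k\|\rightarrow\infty$.

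With both hypotheses in place, Lemma~\ref{lem:cosmic-acc} applies directly and tells us that $\{(1/\|x^k\|)x^k\}_{k\ge 0}$ and $\{(1/\|y^k\|)y^k\}_{k\ge 0}$ have the same weak accumulation points. Since $y^0$ was arbitrary, the set of weak cosmic accumulation points does not depend on the choice of starting point. I do not expect a genuine obstacle here; the argument is essentially a transitivity statement, and the only point requiring care is recording that the divergence of both norms is exactly what licenses the use of Lemma~\ref{lem:cosmic-acc}, rather than taking it for granted.
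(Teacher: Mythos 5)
Your proposal is correct and follows exactly the paper's own argument: non-expansiveness gives the uniform bound $\|x^k-y^k\|\le\|x^0-y^0\|$, Pazy's result \cite[Corollary~6(c)]{Pazy1971} gives divergence of both norms, and Lemma~\ref{lem:cosmic-acc} finishes. Your extra remark that the standing assumption $0\in\overline{\ran(I-T)}\backslash\ran(I-T)$ rules out fixed points is a small, welcome bit of bookkeeping the paper leaves implicit.
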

\begin{proof}
In addition to $\{x^k\}_{k\ge 0}$,
consider another sequence
\[
y^{k+1}=T(y^k)
\]
with some starting point $y^0\in \mathcal{H}$.
As $T$ has no fixed point, $\|x^k\|\rightarrow \infty$
and $\|y^k\|\rightarrow \infty$
\cite[Corollary~6(c)]{Pazy1971}.
By non-expansiveness of $T$, we have
\begin{align*}
 \|x^k-y^k\|\le \|x^0-y^0\|.
\end{align*}
The statement now follows from Lemma~\ref{lem:cosmic-acc}.
\end{proof}
%

With Lemma~\ref{lem:well-def}
we can define
\[
Q=\text{weak accumulation points of } (1/\|x^k\|)x^k,
\]
where $\{x^{k}\}_{k\ge 0}$ is the sequence of \eqref{eq:fixed-iter}.
So $Q$ is the set of cosmic accumulation points of $T$,
and we have cosmic convergence when $Q$ is a singleton.

Note that
$Q$ may not necessarily be a subset of the unit sphere.
Write $S$ and $B$ for the unit sphere and closed unit ball
of $\cH$, i.e.,
\[
S=\{x\in \cH\,|\,\|x\|=1\},
\quad
B=\{x\in \cH\,|\,\|x\|\le1\}.
\]
\begin{fact}
\label{fact:dense}
If $\cH$ is finite dimensional,
$S$ and $B$ are compact.
If $\cH$ is infinite dimensional,
$B$ is weakly compact but $S$ is not,
and $S$ is weakly dense in $B$.
\end{fact}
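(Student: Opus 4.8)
The plan is to treat the finite- and infinite-dimensional cases separately, with essentially all the work concentrated in the weak density claim. In the finite-dimensional case there is nothing to do beyond invoking the Heine--Borel theorem: $S$ and $B$ are closed and bounded, hence compact. In the infinite-dimensional case, the weak compactness of $B$ is also standard, since a Hilbert space is reflexive and $B$ is bounded, convex, and norm-closed, hence weakly closed; Banach--Alaoglu (equivalently, Eberlein--\v{S}mulian together with weak closedness) then gives that $B$ is weakly compact.

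The substance of the statement is that $S$ is weakly dense in $B$, and this is where infinite-dimensionality is used. Fix a point $x \in B$ and an arbitrary basic weak neighborhood $U = \{z \in \cH \,:\, |\langle z - x, y_i\rangle| < \epsilon,\ i = 1, \dots, n\}$, determined by finitely many vectors $y_1, \dots, y_n \in \cH$ and some $\epsilon > 0$; I must exhibit a point of $S$ lying in $U$. Since $\cH$ is infinite dimensional, the span of $\{x, y_1, \dots, y_n\}$ is a proper closed subspace, so its orthogonal complement contains a unit vector $w$. By construction $\langle w, y_i\rangle = 0$ for every $i$ and $\langle x, w\rangle = 0$, so $x + tw \in U$ for all $t \in \mathbb{R}$ while $\|x + tw\|^2 = \|x\|^2 + t^2$. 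Taking $t = \sqrt{1 - \|x\|^2}$, which is real because $x \in B$, produces a point $x + tw \in S \cap U$. As $U$ was arbitrary, $x$ lies in the weak closure of $S$; since $S \subseteq B$ and $B$ is weakly closed, and since $x \in B$ was arbitrary, the weak closure of $S$ is exactly $B$.

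The remaining claim, that $S$ is not weakly compact, then follows at once: the point $0 \in B \setminus S$ belongs to the weak closure of $S$, so $S$ is not weakly closed; since the weak topology is Hausdorff, a weakly compact set would be weakly closed, so $S$ cannot be weakly compact. I expect the density argument to be the only real obstacle, and the key idea there is to use infinite-dimensionality to select a direction $w$ orthogonal to all of the finitely many test vectors defining the given weak neighborhood, so that one can slide from the point $x$ out to the sphere without ever leaving the neighborhood.
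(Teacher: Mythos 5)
Your proof is correct, and it is genuinely more than what the paper provides: the paper gives no argument at all for this fact, disposing of the finite-dimensional assertion by invoking the Heine--Borel theorem and of the infinite-dimensional assertion by citing a standard reference (Corollary~6.29 of Aliprantis--Border). You instead supply the complete textbook argument, and every step checks out. The weak compactness of $B$ via reflexivity and weak closedness of bounded closed convex sets is standard. For the density claim, the key points are all in place: the span of $\{x, y_1, \dots, y_n\}$ is finite dimensional, hence proper exactly because $\cH$ is infinite dimensional, so a unit vector $w$ orthogonal to it exists; including $x$ itself in the span is what guarantees $\|x+tw\|^2 = \|x\|^2 + t^2$ (otherwise the cross term $2t\langle x, w\rangle$ would spoil the norm computation), and orthogonality to the $y_i$ keeps the whole line $x + tw$ inside the basic neighborhood $U$. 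Your deduction of non-compactness of $S$ is also sound, since the weak topology is Hausdorff (continuous linear functionals separate points), so a weakly compact set would be weakly closed, contradicting $0$ lying in the weak closure of $S$. What your approach buys is a self-contained paper; what the paper's citation buys is brevity, which is reasonable since the fact plays only a supporting role (distinguishing $Q \subseteq S$ in finite dimensions from $Q \subseteq B$ in infinite dimensions). One optional simplification: for non-compactness alone you do not need the full density argument, since any orthonormal sequence lies in $S$ and converges weakly to $0 \notin S$, giving a concrete witness that $S$ is not weakly closed.
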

The first assertion of Fact~\ref{fact:dense} follows from the Heine-Borel theorem.
The second assertion of Fact~\ref{fact:dense} can be found in standard references such as
 \cite[Corollary~6.29]{AliprantisBorder2006}.

When $\cH$ is finite dimensional
$Q\subseteq S$.
When $\cH$ is infinite dimensional
$Q\subseteq B$.
This distinction is not hypothetical;
Section~\ref{s:count2} presents an 
infinite dimensional example where
$Q\cap S$ is empty.

\begin{cor}
\label{fact:finite-dim}
$Q$ is never empty.
When $\cH$ is finite dimensional,
any $q\in Q$ satisfies $\|q\|=1$.
\end{cor}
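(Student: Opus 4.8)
The plan is to treat the two assertions separately, both resting on Fact~\ref{fact:dense} together with standard Hilbert space compactness. Throughout, recall that $\|x^k\|\rightarrow\infty$, so $x^k\neq 0$ for large $k$ and the normalized sequence $(1/\|x^k\|)x^k$ is well defined on the relevant tail; since accumulation points depend only on this tail, the normalization poses no issue.

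For non-emptiness of $Q$, I would note that $(1/\|x^k\|)x^k$ lies on the unit sphere $S\subseteq B$ and is therefore bounded. By Fact~\ref{fact:dense}, $B$ is weakly compact in both the finite and infinite dimensional settings; since a Hilbert space is reflexive, the Eberlein--\v{S}mulian theorem upgrades this to weak \emph{sequential} compactness, so the bounded sequence $(1/\|x^k\|)x^k$ admits a weakly convergent subsequence. Its limit is, by definition, an element of $Q$, whence $Q\neq\emptyset$.

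For the norm-one claim in finite dimensions, I would invoke the coincidence of the weak and strong topologies on a finite dimensional space. Any $q\in Q$ is then a strong accumulation point, so some subsequence satisfies $(1/\|x^{k_j}\|)x^{k_j}\rightarrow q$ in norm; since each term has norm $1$ and the norm is strongly continuous, passing to the limit gives $\|q\|=1$.

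I do not anticipate a genuine obstacle, as the corollary essentially repackages Fact~\ref{fact:dense}. The only point requiring care is to resist the temptation to extend the norm-one conclusion to infinite dimensions: there the norm is merely weakly lower semicontinuous, so a weak limit satisfies only $\|q\|\le 1$. This is precisely why the second assertion is confined to the finite dimensional case, and it is consistent with the excerpt's remark that $Q\cap S$ may be empty when $\cH$ is infinite dimensional.
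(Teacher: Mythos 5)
Your proposal is correct and follows essentially the same route as the paper, whose proof is a one-line appeal to Fact~\ref{fact:dense} together with the observation that $(1/\|x^k\|)x^k$ is a sequence in $S$. You have merely spelled out the standard details the paper leaves implicit (weak sequential compactness of $B$ for non-emptiness, and the coincidence of weak and strong topologies in finite dimensions for $\|q\|=1$), and your closing caution about the infinite dimensional case matches the paper's discussion exactly.
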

\begin{proof}
This follows from Fact~\ref{fact:dense} 
and the fact that $Q$ is defined as the weak accumulation points of a sequence in $S$.
\end{proof}

The Kadec-Klee property, also called the Radon-Riesz property,
holds for Hilbert spaces
\cite[Proposition~3.32]{brezis2010functional}.
\begin{fact}[Kadec-Klee]
\label{lem:sw-conv}
Assume a sequence $\{q^k\}_{k\ge 0}\subset \cH$ 
satisfies $\|q^k\|=1$ and $q^k\rightharpoonup q$.
If $\|q\|=1$ then $q^k\rightarrow q$.
If $\|q\|<1$ then $q^k\nrightarrow q$.
\end{fact}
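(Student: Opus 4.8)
The plan is to reduce the statement to the Hilbert space polarization identity and then read off both assertions from a single limit computation. First I would expand
\[
\|q^k-q\|^2=\|q^k\|^2-2\langle q^k,q\rangle+\|q\|^2=1-2\langle q^k,q\rangle+\|q\|^2,
\]
using the hypothesis $\|q^k\|=1$. The only substantive ingredient is that weak convergence $q^k\rightharpoonup q$ means $\langle q^k,u\rangle\to\langle q,u\rangle$ for every $u\in\cH$; applying this to the particular test vector $u=q$ yields $\langle q^k,q\rangle\to\langle q,q\rangle=\|q\|^2$. Substituting into the identity gives the clean limit
\[
\lim_{k\to\infty}\|q^k-q\|^2=1-2\|q\|^2+\|q\|^2=1-\|q\|^2.
\]

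Both parts then fall out by inspecting this limit. When $\|q\|=1$ the right-hand side is $0$, so $\|q^k-q\|\to 0$, that is, $q^k\to q$ strongly, proving the first claim. When $\|q\|<1$ the right-hand side equals $1-\|q\|^2>0$, so $\|q^k-q\|$ stays bounded away from $0$ for large $k$ and hence $q^k\nrightarrow q$, proving the second claim. For the second part one can alternatively argue more crudely: if $q^k\to q$ strongly then $\|q^k\|\to\|q\|$ by continuity of the norm, forcing $\|q\|=1$, a contradiction.

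There is essentially no serious obstacle here, as the result is a soft consequence of the inner-product structure; the only point requiring care is the legitimacy of testing weak convergence against the limit vector $q$ itself. This is justified by the Riesz representation theorem, which identifies $q$ with a bounded linear functional on $\cH$, so that $\langle\,\cdot\,,q\rangle$ is exactly one of the functionals along which weak convergence is asserted. I would emphasize that this single evaluation, $\langle q^k,q\rangle\to\|q\|^2$, is precisely what distinguishes Hilbert spaces (and, more generally, uniformly convex spaces) from general Banach spaces, where the Kadec-Klee property can fail.
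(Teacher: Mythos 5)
Your proof is correct, and it is the standard argument: the paper itself offers no proof of this fact, citing it instead as Proposition~3.32 of Brezis, whose proof is exactly your polarization computation $\|q^k-q\|^2 = 1 - 2\langle q^k,q\rangle + \|q\|^2 \to 1-\|q\|^2$ together with testing weak convergence against $q$ itself. So you have simply made the cited argument explicit; nothing is missing.
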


Weak but not strong cosmic limits arguably
tell us less about the direction in which $x^k$ diverges to infinity,
especially when the weak cosmic limit
is $0$ as in Section~\ref{s:count2}.
In finite dimensions,
Corollary~\ref{fact:finite-dim} states that
$Q\subseteq S$.
In infinite dimensions,
Corollary~\ref{fact:finite-dim}
and
Fact~\ref{lem:sw-conv} states
$Q\cap S$ are the strong cosmic accumulation points 
and
$Q\backslash S$ are the weak cosmic accumulation points.

Again, this paper focuses on
non-expansive operators $T$
with minimal displacement vectors
$v=0$.
We do so, because
 strong cosmic convergence immediately
follows from known results when $v\ne 0$.
\begin{fact}
\label{lem:cosmic-strong}
Assume $T:\cH\rightarrow\cH$ is non-expansive and has no fixed point.
Write $v$ for the minimal displacement vector of $T$.
If $v\ne 0$, then 
\[
(1/\|x^k\|)x^k\rightarrow -v/\|v\|.
\]
\end{fact}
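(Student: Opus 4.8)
The plan is to leverage Pazy's asymptotic expansion $x^k = -kv + o(k)$, stated in the introduction, which holds for any non-expansive $T$ without a fixed point. The key observation is that when $v \neq 0$ this expansion pins down not only the leading-order direction of $x^k$ but also the growth rate of its norm, which together force the normalized iterate to align with $-v$.

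First I would write $x^k = -kv + r^k$, where $\|r^k\| = o(k)$, i.e. $\|r^k\|/k \to 0$. Applying the reverse triangle inequality gives $\bigl|\,\|x^k\| - k\|v\|\,\bigr| \le \|r^k\| = o(k)$, so that $\|x^k\|/k \to \|v\| > 0$; in particular $\|x^k\| \to \infty$, consistent with $T$ having no fixed point, and the normalization $(1/\|x^k\|)x^k$ is well defined for all large $k$. Next I would split
\[
\frac{x^k}{\|x^k\|} = -\frac{k}{\|x^k\|}\,v + \frac{r^k}{\|x^k\|}.
\]
Since $k/\|x^k\| \to 1/\|v\|$, the first term converges in norm to $-v/\|v\|$, while $\|r^k\|/\|x^k\| = o(k)/\bigl(k\|v\| + o(k)\bigr) \to 0$, so the second term vanishes in norm. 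Adding the two yields the claimed convergence.

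The argument presents no real obstacle; the only point requiring care is that the convergence is genuinely strong rather than merely weak, but this is immediate here because we estimate the norm of the difference $(1/\|x^k\|)x^k - (-v/\|v\|)$ directly. An alternative route I could take uses the Baillon--Bruck--Reich result $x^{k+1}-x^k \to -v$: a Cesàro average gives $x^k/k \to -v$, hence $\|x^k\|/k \to \|v\|$, and the same normalization computation then applies.
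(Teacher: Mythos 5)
Your proposal is correct and follows essentially the same route as the paper: the paper's proof also invokes Pazy's Theorem~2 to get $(1/k)x^k\rightarrow -v$, notes that this implies $(1/k)\|x^k\|\rightarrow\|v\|$, and divides the two limits, which is exactly what your decomposition $x^k/\|x^k\| = -(k/\|x^k\|)v + r^k/\|x^k\|$ carries out in expanded form. Your version merely spells out the reverse-triangle-inequality details that the paper leaves implicit.
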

\begin{proof}
Theorem~2 of \cite{Pazy1971}
states that 
$(1/k)x^k\rightarrow -v$,
which also implies \\$(1/k)\|x^k\|\rightarrow \|v\|$.
The lemma follows from dividing the two limits.
\end{proof}

\begin{cor}
\label{cor:inf-disp}
Assume $T:\cH\rightarrow\cH$ is non-expansive and has no fixed point.
If $\{x^k\}_{k\ge 0}$
does not cosmically converge
or cosmically converges weakly but not strongly,
then $v=0$.
\end{cor}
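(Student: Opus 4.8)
The plan is to establish the statement by contraposition, since it is essentially the contrapositive of Fact~\ref{lem:cosmic-strong}. First I would assume $v\ne 0$ and invoke Fact~\ref{lem:cosmic-strong} to obtain the strong convergence
\[
(1/\|x^k\|)x^k\rightarrow -v/\|v\|.
\]
Next I would argue that strong convergence of a sequence forces its set of weak accumulation points to be the singleton consisting of that limit: weak limits are unique, and strong convergence implies weak convergence of every subsequence to the same vector. Hence $Q=\{-v/\|v\|\}$, so the iteration cosmically converges.

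Then I would check that this cosmic convergence is strong rather than merely weak. The limit $-v/\|v\|$ is a unit vector, hence lies in $S$; but in fact we already have strong convergence of $(1/\|x^k\|)x^k$ directly from Fact~\ref{lem:cosmic-strong}, so no appeal to Fact~\ref{lem:sw-conv} is even required. Therefore, when $v\ne 0$, the normalized iterates both cosmically converge and do so strongly. Consequently neither hypothesis of the corollary can hold: the sequence does cosmically converge, ruling out the first case, and its cosmic convergence is strong, ruling out the second.

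The contrapositive of this observation is exactly the claim: if the sequence fails to cosmically converge, or converges only weakly, then $v=0$. The one point deserving care -- the main, if minor, obstacle -- is correctly negating the compound hypothesis, namely recognizing that ``cosmically converges'' together with ``not (converges weakly but not strongly)'' collapses to ``cosmically converges strongly,'' which is precisely what Fact~\ref{lem:cosmic-strong} supplies.
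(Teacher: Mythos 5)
Your proposal is correct and matches the paper's own proof, which likewise notes that $v\ne 0$ implies strong cosmic convergence by Fact~\ref{lem:cosmic-strong} and takes the contrapositive. Your extra elaboration (uniqueness of the limit, the negation of the compound hypothesis) fills in details the paper leaves implicit, but the argument is the same.
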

\begin{proof}
If $v\ne 0$,
then $\{x^k\}_{k\ge 0}$ cosmically converges strongly by Fact~\ref{lem:cosmic-strong}.
The corollary is the  contrapositive of this statement.
\end{proof}
By Corollary~\ref{cor:inf-disp}
the operators presented
in Section~\ref{s:count1} and \ref{s:count2}
have 
minimal displacement vectors $v=0$.

\section{Counter example: cosmic divergence}
\label{s:count1}
In this section, we present an operator on 
$\mathbb{R}^2$
for which cosmic convergence fails.
That this operator
has minimal displacement vector $v=0$
follows from Corollary~\ref{cor:inf-disp}.

\subsection{General construction}
Consider the function
$f:\mathbb{R}^2\rightarrow\mathbb{R}$
defined as
\[
f(x,y)=\max\{\Phi(x),\Psi(y)\}
\]
where
\[
\Phi(x)=
\int_0^x\varphi(x')\; dx',
\qquad
\Psi(y)=
\int_0^y\psi(y')\; dy'.
\]
Assume $\varphi$ and $\psi$ are
nondecreasing functions from $\mathbb{R}$ to $[-1,0)$.
This makes
$\Phi$ and $\Psi$ (continuous) strictly decreasing
convex functions from $\mathbb{R}$ to $\mathbb{R}$,
and this makes $f$ convex.
(Cf.\ Example 8.22 and Proposition 9.3 of \cite{bauschke2017}
or
Theorem~5.5 and Section 24  of \cite{rockafellar1970}.)

Consider the proximal operator
\[
\Prox_{f}(x',y')
=\argmin_{x,y\in \mathbb{R}}
\left\{
f(x,y)+\frac{1}{2}(x-x')^2+\frac{1}{2}(y-y')^2
\right\}
\]
and the iteration
\begin{equation}
(x_{k+1},y_{k+1})=\Prox_{f}(x_k,y_k),
\label{eq:fixed-iter-examp1}
\end{equation}
with starting points $x_0=y_0=0$.

\begin{lem}[\cite{moreau1965}]
The operator
\[
\Prox_f:\mathbb{R}^2\rightarrow \mathbb{R}^2
\]
is well-defined (the argmin exists and is unique) and is non-expansive.
\end{lem}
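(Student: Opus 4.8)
The plan is to treat this as the classical fact that the proximal operator of a proper, lower semicontinuous convex function is well-defined and (firmly) non-expansive, specialized to the continuous convex $f$ on $\mathbb{R}^2$ constructed above. Abbreviate $z=(x,y)$ and $z'=(x',y')$, and write the objective as $g_{z'}(z)=f(z)+\frac{1}{2}\|z-z'\|^2$, so that $\Prox_f(z')=\argmin_z g_{z'}(z)$.

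First I would establish existence and uniqueness. Since $f$ is convex and finite on all of $\mathbb{R}^2$, it is continuous, hence $g_{z'}$ is continuous. The quadratic term makes $g_{z'}$ coercive: any proper convex function admits a continuous affine minorant, so $f(z)\ge\langle a,z\rangle+b$ for some $a\in\mathbb{R}^2$, $b\in\mathbb{R}$, whence $g_{z'}(z)\ge\langle a,z\rangle+b+\frac{1}{2}\|z-z'\|^2\to+\infty$ as $\|z\|\to\infty$. A continuous coercive function on $\mathbb{R}^2$ attains its minimum by Weierstrass, giving existence. For uniqueness, note that $\frac{1}{2}\|z-z'\|^2$ is $1$-strongly convex and $f$ is convex, so $g_{z'}$ is strongly, hence strictly, convex, and a strictly convex function has at most one minimizer. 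Thus the argmin exists and is unique, and $\Prox_f$ is well-defined.

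For non-expansiveness I would use the first-order optimality condition together with monotonicity of the subdifferential. Let $p=\Prox_f(z')$ and $q=\Prox_f(w')$. Fermat's rule applied to $g_{z'}$ gives $0\in\partial f(p)+(p-z')$, i.e.\ $z'-p\in\partial f(p)$, and likewise $w'-q\in\partial f(q)$. Monotonicity of $\partial f$ then yields
\[
\langle (z'-p)-(w'-q),\, p-q\rangle\ge 0.
\]
Rearranging gives the firm non-expansiveness inequality $\langle z'-w',\, p-q\rangle\ge\|p-q\|^2$, and Cauchy–Schwarz produces $\|p-q\|\le\|z'-w'\|$, which is exactly non-expansiveness. (This route in fact delivers the stronger conclusion that $\Prox_f$ is firmly non-expansive, consistent with the operator's description in Section~\ref{s:count1}.)

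The argument has no serious obstacle, as this is a standard result; the step requiring the most care is the subdifferential calculus. One must know that $\partial f(p)$ is nonempty—guaranteed because $f$ is continuous, hence subdifferentiable everywhere on $\mathbb{R}^2$—that the sum rule $\partial g_{z'}(p)=\partial f(p)+(p-z')$ holds, which is immediate since the quadratic term is differentiable, and that $\partial f$ is monotone, a defining property of convex subdifferentials. Alternatively, one may simply invoke Moreau's original theorem or a standard reference, which is presumably the intent of the citation.
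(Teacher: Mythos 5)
Your proof is correct. The paper itself gives no proof of this lemma---it simply cites Moreau's 1965 result, exactly as you anticipate in your closing remark---and the argument you supply (existence via an affine minorant plus coercivity of the quadratic term, uniqueness via strong convexity, and non-expansiveness via Fermat's rule together with monotonicity of $\partial f$) is precisely the standard proof underlying that citation. All the supporting facts you flag do hold here: $f$ is finite and convex on all of $\mathbb{R}^2$, hence continuous and everywhere subdifferentiable, and the sum rule applies since the quadratic is smooth; your stronger conclusion of \emph{firm} non-expansiveness is also consistent with the paper's parenthetical description of the operator in Section~\ref{s:count1}.
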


Define the curve
\[
\Gamma = \{(x,y)\in \mathbb{R}^2\,|\,
\Phi(x)=\Phi(y)\le 0\}.
\]
Note that $\Phi(0)=\Psi(0)=0$, and
thus $(x_0,y_0)\in \Gamma$.
In fact, $\{(x_k,y_k)\}_{k\ge 0}\subset \Gamma$.
\begin{lem}
\label{lem:edge}
If $\Phi(x')=\Psi(y')$ then
\[
(x,y)=\Prox_{f}(x',y')
\]
satisfies
\[
\Phi(x)=\Psi(y).
\]
\end{lem}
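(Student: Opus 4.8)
The plan is to exploit the strong convexity of the proximal objective and rule out the two ``off-diagonal'' possibilities by a monotonicity argument. I would write
\[
g(x,y)=\max\{\Phi(x),\Psi(y)\}+\tfrac12(x-x')^2+\tfrac12(y-y')^2
\]
and use that $(x,y)=\Prox_f(x',y')$ is its unique minimizer, characterized (since the quadratic part is everywhere differentiable) by the inclusion $0\in\partial f(x,y)+(x-x',y-y')$, where $f(x,y)=\max\{\Phi(x),\Psi(y)\}$.

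Next I would argue by contradiction, assuming $\Phi(x)\ne\Psi(y)$. Because the roles of $(\Phi,x,x')$ and $(\Psi,y,y')$ are symmetric, it is enough to dispose of the case $\Phi(x)>\Psi(y)$. The point of this case is that strict dominance makes $f$ agree locally with $(x,y)\mapsto\Phi(x)$, so its subdifferential is $\partial\Phi(x)\times\{0\}$ and the optimality inclusion decouples into $y-y'=0$ and $x'-x\in\partial\Phi(x)$.

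From here the contradiction should fall out in two lines. The equation $y=y'$ together with the hypothesis $\Phi(x')=\Psi(y')$ gives $\Psi(y)=\Phi(x')$, so $\Phi(x)>\Psi(y)$ becomes $\Phi(x)>\Phi(x')$, whence $x<x'$ because $\Phi$ is strictly decreasing. On the other hand $x'-x\in\partial\Phi(x)$ forces $x'-x<0$, i.e.\ $x>x'$, since every element of $\partial\Phi(x)$ lies in $[-1,0)$. These are incompatible, so $\Phi(x)>\Psi(y)$ is impossible, and by symmetry so is $\Psi(y)>\Phi(x)$; only $\Phi(x)=\Psi(y)$ remains.

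The step I expect to require the most care is the subdifferential bookkeeping: one must check that strict dominance genuinely kills the $y$-component of $\partial f$ so that the optimality condition splits, and one must track that the values of $\varphi$ lie in $[-1,0)$ rather than merely $[-1,0]$. It is exactly this strict negativity---inherited by $\partial\Phi(x)=[\varphi(x^-),\varphi(x^+)]$---that upgrades the weak inequality $x\ge x'$ to the strict $x>x'$ needed to close the contradiction.
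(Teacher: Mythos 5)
Your argument is correct, but it takes a genuinely different route from the paper's. You work through first-order optimality: the inclusion $(x'-x,\,y'-y)\in\partial f(x,y)$, plus the max-rule observation that strict dominance $\Phi(x)>\Psi(y)$ makes $f$ agree with $(x,y)\mapsto\Phi(x)$ near the minimizer, so that $\partial f(x,y)=\partial\Phi(x)\times\{0\}$ and the condition decouples into $y=y'$ and $x'-x\in\partial\Phi(x)$, whence the sign contradiction. The paper instead argues purely variationally, with no subdifferential calculus at all: assuming $\Phi(x)>\Psi(y)$, prox-minimality gives $\Psi(y)<\Phi(x)\le\Psi(y')$, the intermediate value theorem produces $\theta\in[0,1)$ with $\Psi(\theta y+(1-\theta)y')=\Phi(x)$, and replacing $y$ by $\theta y+(1-\theta)y'$ leaves $\max\{\Phi,\Psi\}$ unchanged while strictly shrinking the quadratic term $\tfrac12(y-y')^2$ (since $y\ne y'$ and $\theta<1$), contradicting the definition of $\Prox_f$. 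The paper's proof needs only continuity of $\Psi$ and the minimizing property of the prox --- no monotonicity of $\Phi$ and no sign condition on $\varphi$ --- so it is the more robust argument; yours is more mechanical and extracts sharper structure (the exact identity $y=y'$ and a subgradient localization of $x$). One remark on the step you flag as delicate: the strict negativity $\varphi\in[-1,0)$ is not actually load-bearing in your scheme. The weak inclusion $x'-x\in\partial\Phi(x)\subseteq[-1,0]$ already yields $x\ge x'$, while $\Phi(x)>\Phi(x')$ together with $\Phi$ nonincreasing forces the strict inequality $x<x'$ (if $x\ge x'$ then $\Phi(x)\le\Phi(x')$), so the contradiction closes without upgrading $x\ge x'$ to $x>x'$; your careful tracking of $[-1,0)$ versus $[-1,0]$ is a safety margin rather than a necessity.
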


\begin{proof}
Without loss of generality, assume for contradiction that
$\Phi(x)>\Psi(y)$.
By definition $\Prox_f$, we have
\[
\Psi(y)< \Phi(x)=\max\{\Phi(x),\Psi(y)\}\le 
\max\{\Phi(x'),\Psi(y')\}=
\Psi(y').
\]
So $\Psi(y)< \Phi(x)\le \Psi(y')$
and, since $\Psi$ is continuous, there is a 
$\theta \in [0,1)$ such that 
\[
\Psi(\theta y+(1-\theta)y')=\Phi(x).
\]
Since 
$\Psi(y)<  \Psi(y')$, we have $y\ne y'$.
This means 
\[
\Phi(x)=
\max\{\Phi(x),
\Psi(y)
\}
=
\max\{\Phi(x),
\Psi(\theta y+(1-\theta)y')
\}
\]
and therefore
\begin{align*}
&f(x,y)+\frac{1}{2}(x-x')^2+\frac{1}{2}(y-y')^2\\
&>f(x,\theta y+(1-\theta)y')
+\frac{1}{2}(x-x')^2+\frac{1}{2}((\theta y+(1-\theta)y')-y')^2\\
&=f(x,\theta y+(1-\theta)y')
+\frac{1}{2}(x-x')^2+\frac{1}{2}\theta^2( y-y')^2.
\end{align*}
This contradicts the fact that $(x,y)=\Prox_{f}(x',y')$
is defined as the minimizer.
\end{proof}

\begin{fact}
[Theorem~2.1 of \cite{guler1991}]
\label{lem:prox-min}
The iteration \eqref{eq:fixed-iter-examp1} satisfies
\[
f(x_k,y_k)\rightarrow\inf_{x,y\in \mathbb{R}}f(x,y)
\]
as $k\rightarrow\infty$.
(To be clear, 
this holds
even when $\inf_{x,y\in \mathbb{R}}f(x,y)$ has no minimizer
and when $\inf_{x,y\in \mathbb{R}}f(x,y)=-\infty$.)
\end{fact}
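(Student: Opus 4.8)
The plan is to recognize iteration \eqref{eq:fixed-iter-examp1} as the proximal point algorithm applied to the continuous convex function $f$, and to prove the stated convergence by the standard telescoping (energy) argument, taking care to compare against arbitrary test points rather than a minimizer, which is exactly what lets the conclusion survive when the infimum is unattained or equals $-\infty$. Throughout I would write $z_k=(x_k,y_k)$ so that $z_{k+1}=\Prox_f(z_k)$.

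First I would record the first-order optimality condition for the prox subproblem. Since $z_{k+1}$ minimizes $f(\cdot)+\tfrac12\|\cdot-z_k\|^2$, Fermat's rule gives $z_k-z_{k+1}\in\partial f(z_{k+1})$, which is meaningful because $f$ is finite-valued and convex on $\mathbb{R}^2$ and hence has nonempty subdifferential everywhere. Comparing $z_{k+1}$ against $z_k$ in its own defining minimization also yields the monotone decrease
\[
f(z_{k+1})+\tfrac12\|z_{k+1}-z_k\|^2\le f(z_k),
\]
so $\{f(z_k)\}_{k\ge 0}$ is nonincreasing.

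The key step is to turn the subgradient inequality into a telescoping estimate against an arbitrary $w\in\mathbb{R}^2$. Combining $z_k-z_{k+1}\in\partial f(z_{k+1})$ with the polarization identity
\[
\langle z_k-z_{k+1},\,w-z_{k+1}\rangle=\tfrac12\|z_k-z_{k+1}\|^2+\tfrac12\|w-z_{k+1}\|^2-\tfrac12\|w-z_k\|^2
\]
gives
\[
f(z_{k+1})-f(w)\le\tfrac12\|w-z_k\|^2-\tfrac12\|w-z_{k+1}\|^2-\tfrac12\|z_k-z_{k+1}\|^2.
\]
Dropping the last (nonpositive) term, summing over $k=0,\dots,N-1$, and using the monotonicity of $\{f(z_k)\}$ to bound the sum below by $N\bigl(f(z_N)-f(w)\bigr)$, the right-hand side telescopes to
\[
f(z_N)\le f(w)+\frac{\|w-z_0\|^2}{2N}
\]
for every $w\in\mathbb{R}^2$ and every $N$.

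Finally I would fix $w$ and let $N\to\infty$ to obtain $\limsup_N f(z_N)\le f(w)$, then take the infimum over $w$ to conclude $\limsup_N f(z_N)\le\inf_{x,y}f(x,y)$; together with the trivial bound $f(z_N)\ge\inf f$ this yields $f(z_N)\to\inf f$. The one point requiring care — and the main obstacle — is precisely the unattained / $-\infty$ case, but the argument is arranged to dispose of it automatically: because the comparison is made against arbitrary $w$ with finite $\|w-z_0\|$ rather than against a (possibly nonexistent) minimizer, the bound forces $\limsup_N f(z_N)\le \inf_w f(w)$, so when $\inf f=-\infty$ we immediately get $f(z_N)\to-\infty$ without ever invoking a minimizer.
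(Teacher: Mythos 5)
The paper offers no proof of this statement at all: it is imported as a \emph{Fact} with a citation to Theorem~2.1 of G\"uler (1991), so there is no internal argument to compare against. Your blind proof is correct and self-contained, and it is in substance the standard argument underlying the cited theorem: the Fermat-rule inclusion $z_k-z_{k+1}\in\partial f(z_{k+1})$ (valid since $f$ is finite-valued convex on $\mathbb{R}^2$, hence subdifferentiable everywhere), the three-point identity (what you call polarization), and the telescoped estimate $f(z_N)\le f(w)+\|w-z_0\|^2/(2N)$ for \emph{every} $w$ are precisely the classical proximal-point energy analysis (the same nonasymptotic bound appears in G\"uler's proof; the convergence statement itself goes back to Br\'ezis--Lions). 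Each intermediate step checks out: the descent inequality $f(z_{k+1})+\tfrac12\|z_{k+1}-z_k\|^2\le f(z_k)$ from comparing $z_{k+1}$ with $z_k$ in the prox subproblem, the inner-product identity, and the lower bound $N\bigl(f(z_N)-f(w)\bigr)$ on the telescoped sum via monotonicity of $\{f(z_k)\}$. Most importantly, you correctly handle the one delicate point that the paper's parenthetical explicitly flags --- that the infimum may be unattained or equal $-\infty$ --- by comparing against arbitrary test points rather than a (possibly nonexistent) minimizer, so that $\limsup_N f(z_N)\le f(w)$ for all $w$ yields $f(z_N)\to\inf f$ in all cases, including $\inf f=-\infty$. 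What your route buys over the paper's is a short, elementary, and fully verifiable proof in place of a black-box citation; what the citation buys is G\"uler's more general statement (the proximal point method with variable step sizes $\lambda_k$ in a Hilbert space, where convergence of function values requires $\sum_k\lambda_k=\infty$), of which your unit-step, $\mathbb{R}^2$ argument is the special case actually needed here.
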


By Lemma~\ref{lem:edge} and Fact~\ref{lem:prox-min},
\[
f(x_k,y_k)=\Phi(x_k)=\Psi(y_k)\rightarrow \inf_{x,y\in \mathbb{R}} f(x,y)=
\max \left\{\inf_{x\in \mathbb{R}}\Phi(x),\inf_{y\in \mathbb{R}}\Psi(y)\right\}
\]
as $k\rightarrow\infty$.
Because $\Phi$ and $\Psi$ are strictly decreasing functions,
the minimum of $f(x,y)$ is not attained.
Therefore, $\{(x_k,y_k)\}_{k\ge 0}$ has no cluster points, and
$x_k\rightarrow\infty$ and/or $y_k\rightarrow\infty$.
%
%

We can think of $\Gamma$ as a 1-dimensional curve in $\mathbb{R}^2$
starting at $(0,0)$ and going off to infinity.
The sequence $\{(x_k,y_k)\}_{k\ge 0}$
stays on and moves along the curve $\Gamma$ without clustering anywhere.
Lemma~\ref{lem:stepsize} implies that
the sequence 
$\{(x_k,y_k)\}_{k\ge 0}$ traverses $\Gamma$ in small increments of size at most $1$.
Conversely, Lemma~\ref{lem:iter-approx} states that for any point $\Gamma$,
there is a point among $\{(x_k,y_k)\}_{k\ge 0}$ with distance at most $1$.

\begin{lem}[Corollary 17.19 of \cite{bauschke2017}]
\label{lem:lip}
Assume a (closed) convex function $f:\mathbb{R}^n\rightarrow\mathbb{R}$
is $L$-Lipschitz continuous.
Then any $g\in \partial f(x)$
for any $x\in \mathbb{R}^n$ satisfies
$\|g\|\le L$.
\end{lem}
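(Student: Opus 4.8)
The plan is to combine the defining subgradient inequality with the Lipschitz bound, evaluated at a well-chosen test point. If $g=0$ the claim is immediate, so assume $g\ne 0$ and set $u=g/\|g\|$.

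First I would record that $g\in\partial f(x)$ means, by definition,
\[
f(y)\ge f(x)+\langle g,y-x\rangle
\]
for every $y\in\mathbb{R}^n$. The idea is to choose $y$ so that $\langle g,y-x\rangle$ is as large as possible relative to $\|y-x\|$, which is achieved by moving in the direction of $g$ itself. So for $t>0$ I would take $y=x+tu$, giving $\langle g,y-x\rangle=t\langle g,u\rangle=t\|g\|$, hence
\[
f(x+tu)-f(x)\ge t\|g\|.
\]

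Next I would invoke $L$-Lipschitz continuity, which yields
\[
f(x+tu)-f(x)\le |f(x+tu)-f(x)|\le L\|tu\|=Lt,
\]
since $\|u\|=1$. Chaining the two displays gives $t\|g\|\le Lt$, and dividing by $t>0$ produces $\|g\|\le L$, as desired.

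There is essentially no hard step here: the entire content is the observation that the choice $u=g/\|g\|$ simultaneously maximizes the left side of the subgradient inequality and keeps the displacement norm equal to $t$. The only case requiring separate (and trivial) attention is $g=0$, and since $t>0$ may be fixed rather than sent to a limit, no limiting argument is needed. The same reasoning carries over verbatim to any Hilbert space, with $\langle\cdot,\cdot\rangle$ the inner product, which is in fact the setting where this lemma will be applied later.
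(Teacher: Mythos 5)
Your proof is correct: the subgradient inequality tested at $y=x+t\,g/\|g\|$ combined with the Lipschitz bound gives $t\|g\|\le Lt$ immediately, and the $g=0$ case is handled separately as you note. The paper offers no proof of its own --- it simply cites Corollary~17.19 of Bauschke--Combettes --- and your argument is the standard one underlying that reference, so there is nothing to reconcile; your closing remark that the argument works verbatim in any Hilbert space is also accurate (though the paper only invokes the lemma on $\mathbb{R}^2$).
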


%
%

\begin{lem}
\label{lem:stepsize}
For all $x,y\in \mathbb{R}$
\[
\|(x,y)-\Prox_f(x,y)\|\le 1.
\]
\end{lem}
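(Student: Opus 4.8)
The plan is to identify the displacement vector $(x,y)-\Prox_f(x,y)$ as a subgradient of $f$ and then bound its norm using the Lipschitz constant of $f$ via Lemma~\ref{lem:lip}.

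First I would write down the first-order optimality condition for the proximal minimization. Fix $(x,y)\in\mathbb{R}^2$ and let $(u,v)=\Prox_f(x,y)$. Since the objective $(u,v)\mapsto f(u,v)+\tfrac12(u-x)^2+\tfrac12(v-y)^2$ is convex and the quadratic term is smooth, the subdifferential sum rule gives
\[
(0,0)\in\partial f(u,v)+\big((u,v)-(x,y)\big),
\]
which rearranges to
\[
(x,y)-(u,v)\in\partial f(u,v).
\]
Thus the displacement vector $(x,y)-\Prox_f(x,y)$ is a subgradient of $f$ at the image point $(u,v)$.

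Next I would show that $f$ is $1$-Lipschitz with respect to the Euclidean norm on $\mathbb{R}^2$. Regarding $\Phi$ as a function of $(x,y)$ that depends only on the first coordinate, the assumption $\varphi\in[-1,0)$ forces $|\Phi(x_1)-\Phi(x_2)|=\big|\int_{x_2}^{x_1}\varphi\big|\le|x_1-x_2|\le\|(x_1,y_1)-(x_2,y_2)\|$, so this function is $1$-Lipschitz; the same argument applied to $\psi$ shows the coordinate function $\Psi$ is $1$-Lipschitz. Because the pointwise maximum of two $L$-Lipschitz functions is again $L$-Lipschitz, $f=\max\{\Phi,\Psi\}$ is $1$-Lipschitz.

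Finally, combining the two observations, Lemma~\ref{lem:lip} applied to the $1$-Lipschitz convex function $f$ shows that every element of $\partial f(u,v)$ has norm at most $1$. In particular $\|(x,y)-\Prox_f(x,y)\|\le 1$, which is the claim. I expect the only slightly delicate point to be the justification of the optimality condition and the subdifferential sum rule, but these are standard for a convex function perturbed by a smooth quadratic; the Lipschitz estimate and the concluding bound are then routine.
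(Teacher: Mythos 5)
Your proposal is correct and matches the paper's own proof essentially step for step: both identify the displacement $(x,y)-\Prox_f(x,y)$ as a subgradient of $f$ at the prox point via the optimality condition, establish that $f$ is $1$-Lipschitz because $\varphi,\psi$ take values in $[-1,0)$ and the pointwise maximum preserves the Lipschitz constant, and then invoke Lemma~\ref{lem:lip} to bound the subgradient's norm by $1$. Your write-up is if anything slightly more explicit about the subdifferential sum rule and the two-variable Lipschitz estimate, which the paper states more briefly.
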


\begin{proof}
Since $\Phi(x)$ and $\Psi(y)$ 
are defined to be antiderivatives of $\varphi(x)$ and $\psi(y)$, functions with absolute magnitude at most $1$,
they are $1$-Lipschitz continuous functions.
This makes $f(x,y)=\max\{\Phi(x),\Psi(y)\}$ $1$-Lipschitz continuous as well.
By Lemma~\ref{lem:lip},
any $g\in \partial f(x,y)$ for any $x,y\in \mathbb{R}$
satisfies $\|g\|\le 1$.

Write
\[
(x_+,y_+)=\Prox_f(x,y).
\]
Then the optimality conditions of the minimization defining
$\Prox_f$ gives us
\[
g= (x-x_+,y-y_+)\in \partial f(x_+,y_+).
\]
 Since $\|g\|\le 1$, we have
$\|(x-x_+,y-y_+)\|\le 1$.
\end{proof}

\begin{lem}
\label{lem:iter-approx}
For any $(\xi,\zeta)\in \Gamma$, there is a $k\ge 0$  
such that 
\[
\|(\xi,\zeta)-(x_k,y_k)\|\le 1.
\]
\end{lem}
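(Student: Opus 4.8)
The plan is to exploit two monotonicities: the proximal iteration decreases the value of $f$, and the iterates move monotonically along $\Gamma$. First I would record the standard descent property of the proximal point method. Since $(x_{k+1},y_{k+1})$ minimizes $f(\cdot,\cdot)+\tfrac12\|(\cdot,\cdot)-(x_k,y_k)\|^2$, comparing the objective value at $(x_{k+1},y_{k+1})$ with its value at $(x_k,y_k)$ gives $f(x_{k+1},y_{k+1})\le f(x_k,y_k)$. Hence, using that the iterates lie on $\Gamma$, the sequence $t_k:=f(x_k,y_k)=\Phi(x_k)=\Psi(y_k)$ is nonincreasing with $t_0=0$, and by Fact~\ref{lem:prox-min} it decreases to $\inf f$.

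Next I would parametrize $\Gamma$ by this level. Because $\Phi$ and $\Psi$ are continuous and strictly decreasing, they are bijections onto their (open) ranges, so a point $(\xi,\zeta)\in\Gamma$ is determined by $s:=\Phi(\xi)=\Psi(\zeta)\le 0$, with $\xi=\Phi^{-1}(s)$ and $\zeta=\Psi^{-1}(s)$ strictly decreasing in $s$. Finiteness of $\xi,\zeta$ forces $s$ to lie strictly inside the ranges of $\Phi$ and $\Psi$, whence $s>\max\{\inf\Phi,\inf\Psi\}=\inf f$. If $s=0$ then $(\xi,\zeta)=(0,0)=(x_0,y_0)$ and there is nothing to prove, so I assume $s<0$.

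Now I would locate $(\xi,\zeta)$ between two consecutive iterates. Since $t_0=0>s$ while $t_k\to\inf f<s$, there is a first index $K\ge 1$ with $t_K\le s$, and minimality gives $t_{K-1}>s$, so $t_K\le s<t_{K-1}$. Applying the strictly decreasing maps $\Phi^{-1}$ and $\Psi^{-1}$ yields $x_{K-1}<\xi\le x_K$ and $y_{K-1}<\zeta\le y_K$; in particular $0\le x_K-\xi\le x_K-x_{K-1}$ and $0\le y_K-\zeta\le y_K-y_{K-1}$. Squaring coordinatewise and adding then gives
\[
\|(\xi,\zeta)-(x_K,y_K)\|^2\le (x_K-x_{K-1})^2+(y_K-y_{K-1})^2=\|(x_K,y_K)-(x_{K-1},y_{K-1})\|^2\le 1,
\]
where the last inequality is Lemma~\ref{lem:stepsize}, so $k=K$ works.

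The descent inequality and the coordinatewise comparison are routine. The step needing the most care is the monotone structure that makes the sandwiching valid: that $f(x_k,y_k)$ is monotone and that, via the strictly decreasing inverses, $(\xi,\zeta)$ genuinely sits coordinatewise inside the box spanned by two consecutive iterates. It is exactly this monotonicity (both coordinates increase as the level $s$ decreases) that converts the \emph{Euclidean} step bound of Lemma~\ref{lem:stepsize} into control of the distance from $(\xi,\zeta)$ to the nearest iterate; without it, the small-step property would bound only arc length along $\Gamma$, not the gap-filling we need.
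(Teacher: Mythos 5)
Your proposal is correct and follows essentially the same route as the paper: locate a crossing index where $f(x_k,y_k)$ passes the level $s=\Phi(\xi)=\Psi(\zeta)$, use Lemma~\ref{lem:edge} and the strict monotonicity of $\Phi,\Psi$ to sandwich $(\xi,\zeta)$ coordinatewise between consecutive iterates, and bound the distance by the step length via Lemma~\ref{lem:stepsize}. The only cosmetic differences are that you invoke the proximal descent property and take the \emph{first} crossing index (and carefully note $s>\inf f$), whereas the paper simply asserts the existence of a crossing from $f(x_0,y_0)=0$ and the convergence of $f(x_k,y_k)$ to a strictly smaller limit, without needing monotonicity.
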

\begin{proof}
Fact~\ref{lem:prox-min} tells us
\[
f(x_k,y_k)\rightarrow 
\max\left\{\inf_{x\in \mathbb{R}}
\Phi(x),
\inf_{y\in \mathbb{R}}
\Psi(y)\right\}
\]
as $k\rightarrow\infty$.
Since $\Phi$ and $\Psi$ are strictly decreasing,
$f(x_k,y_k)$ never attains the limit for finite $k$.
Since $(\xi,\zeta)\in \Gamma$, we have $\Phi(\xi)=\Psi(\zeta)\le 0$.
Finally, since $f(x_0,y_0)=0$,
there is a $k$ such that 
\[
f(x_k,y_k)\ge 
\Phi(\xi)=\Psi(\zeta)>f(x_{k+1},y_{k+1}).
\]
By Lemma~\ref{lem:edge}, this further implies 
\begin{align*}
\Phi(x_k)&\ge 
\Phi(\xi)>\Phi(x_{k+1})\\
\Psi(y_k)&\ge 
\Psi(\zeta)>\Psi(y_{k+1}).
\end{align*}
Since $\Phi$ and $\Psi$ are strictly decreasing univariate functions,
we have  $ x_k\le \xi<x_{k+1}$ and $y_k\le \zeta< y_{k+1}$.
With Lemma~\ref{lem:stepsize}
we have 
\[
\|(x_k,y_k)-(\xi,\zeta)\|^2\le \|(x_k,y_k)-(x_{k+1},y_{k+1})\|^2\le 1.
\]
\end{proof}

\subsection{Design of $\varphi$ and $\psi$}
So the sequence $\{(x_k,y_k)\}_{k\ge 0}$ 
stays on and moves along the curve $\Gamma$.
We now design $\varphi$ and $\psi$ so that
there a sequence $\{(\xi_n,\zeta_n)\}_{n\ge 0}\subset \Gamma$
with more than one cosmic accumulation point.
Roughly speaking, this means $\Gamma$ goes off to infinity with no single cosmic direction,
and this will imply $\{(x_k,y_k)\}_{k\ge 0}$ has no single cosmic limit.
This argument is made formal in Section~\ref{ss:cosmic-conclusion}.


Let
\[
\xi_n=\sum^n_{i=1}i^i
\]
for $n=1,2,\dots$ and $\xi_0=0$.
Also let
\[
\zeta_n=\sum^n_{i=1}\frac{2}{3+(-1)^i}i^i
\]
for $n=1,2,\dots$ and $\zeta_0=0$.
For $n>1$, we have
\[
\xi_n=\sum^n_{i=1}i^i\le \sum^n_{i=1}n^i
\le \sum^n_{i=0}n^i=\frac{n^{n+1}-1}{n-1}\le \frac{n^{n+1}}{n-1}
\]
and 
\[
\zeta_n\le \xi_n\le \frac{n^{n+1}}{n-1}.
\]
For $n>1$, we have
\[
0\le 
\frac{\xi_n}{\xi_{n+1}-\xi_n}
=
\frac{\xi_n}{(n+1)^{n+1}}\le 
\frac{n^{n+1}}{(n-1)(n+1)^{n+1}}\le \frac{1}{n-1},
\]
where the first inequality and equality follows from the definition of $\{\xi_n\}_{n\ge 0}$,
the second inequality follows from the bound on $\xi_n$, and the third inequality
follows from $n^{n+1}\le (n+1)^{n+1}$. 
Likewise, we have
\[
0\le
\frac{\zeta_n}{\zeta_{n+1}-\zeta_n}
\le 
\frac{\zeta_n}{(n+1)^{n+1}/2}\le 
\frac{2n^{n+1}}{(n-1)(n+1)^{n+1}}\le \frac{2}{n-1}
\]
for $n>1$.
These inequalities say that the increment from $\xi_n$ to $\xi_{n+1}$
is much larger than $\xi_n$ (and the same is true for $\zeta_n$).

Define
\[
\varphi(x)=\left\{
\begin{array}{ll}
-1&\text{for }x<0\\
-\frac{1}{n^n}&
\text{for }x\in [\xi_{n-1},\xi_{n}),\quad \text{for }n=1,2,\dots
\end{array}
\right.
\]
and
\[
\psi(y)=\left\{
\begin{array}{ll}
-1&\text{for }y<0\\
-\frac{3+(-1)^n}{2}
\frac{1}{n^n}&
\text{for }y\in [\zeta_{n-1},\zeta_{n}),\quad \text{for }n=1,2,\dots.
\end{array}
\right.
\]
Note that 
\begin{align*}
\Phi(\xi_n)&=
\sum^n_{i=1}
\int ^{\xi_i}_{\xi_{i-1}}\varphi(x')\;dx'\\
&=
\sum^n_{i=1}
\left(-\frac{1}{i^i}(\xi_i-\xi_{i-1})\right)\\
&=
\sum^n_{i=1}
(-1)=-n,
\end{align*}
and, by the same reasoning, 
$\Psi(\zeta_n)=-n$.
Therefore, 
$\Phi(\xi_n)=\Psi(\zeta_n)=-n$
and $(\xi_n,\zeta_n)\in \Gamma$
for $n=1,2,\dots$.

The sequence $\{(\xi_n,\zeta_n)\}_{n\ge 0}$
diverges to infinity, i.e.,
$\|(\xi_n,\zeta_n)\|\rightarrow\infty$,
and has two cosmic accumulation points.
To see why, note that 
\begin{align*}
\frac{\xi_{n+1}}{\zeta_{n+1}}&=
\frac{\xi_{n+1}
-\xi_n+\xi_n
}{\zeta_{n+1}
-\zeta_n+\zeta_n
}\\
&=
\frac{\xi_{n+1}
-\xi_n
}{\zeta_{n+1}
-\zeta_n
}
\frac{
1+\xi_n/(\xi_{n+1}-\xi_n)
}{
1+\zeta_n/(\zeta_{n+1}-\zeta_n)
}\\
&=
\frac{3+(-1)^{n+1}}{2}
\frac{
1+\mathcal{O}(1/n)
}{
1+\mathcal{O}(1/n)
}.
\end{align*}
So
\[
\frac{\xi_{2m}}{\zeta_{2m}}
\rightarrow 2\qquad
\frac{\xi_{2m+1}}{\zeta_{2m+1}}
\rightarrow 1
\]
as $m\rightarrow\infty$.
This tells us  $\{(\xi_n,\zeta_n)\}_{n=0,2,4,\dots}$
has the cosmic limit $(2/\sqrt{5},1/\sqrt{5})$
and 
$\{(\xi_n,\zeta_n)\}_{n=1,3,5,\dots}$
has the cosmic limit $(1/\sqrt{2},1/\sqrt{2})$.
We conclude that the whole sequence
$\{(\xi_n,\zeta_n)\}_{n\ge 0}$ has two cosmic accumulation points.

\subsection{Cosmic divergence}
\label{ss:cosmic-conclusion}
Since $\{(\xi_n,\zeta_n)\}_{n\ge 0}\subset \Gamma$,
Lemma~\ref{lem:iter-approx} says
there is a subsequence
\[
\{(x_{k_n},y_{k_n})\}_{n\ge 0}\subset \{(x_{k},y_{k})\}_{k\ge 0}
\]
such that 
\[
\|(x_{k_n},y_{k_n})-(\xi_n,\zeta_n)\|\le 1
\]
for $n=0,1,\dots$.
By Lemma~\ref{lem:cosmic-acc},
$\{(x_{k_n},y_{k_n})\}_{n\ge 0}$ has the same
cosmic accumulation points
as $(\xi_n,\zeta_n)$.
Thus  we conclude that
the whole sequence $\{(x_{k},y_{k})\}_{k\ge0}$ has more than $1$ cosmic accumulation point.

\section{Counter example: weak, but not strong, cosmic convergence}
\label{s:count2}
In this section, we present an operator on 
an infinite dimensional Hilbert space
for which its fixed-point iteration
cosmically converges to $0$ weakly, i.e.,
the iterates of \eqref{eq:fixed-iter} satisfy
\[
(1/\|x^{k}\|)x^k\rightharpoonup 0.
\]
This example shows that weak cosmic convergence
is indeed possible in infinite dimensions
and that
a weak cosmic limit 
may not be very informative
in understanding the direction in which $\{x^k\}_{k\ge 0}$ diverges to infinity.
That this operator
has minimal displacement vector $v=0$
follows from Corollary~\ref{cor:inf-disp}.

\subsection{Univariate building block}
We first build and analyze a non-expansive operator on $\mathbb{R}$
and later use it to build the full example.

Consider the convex function $\varphi:\mathbb{R}\rightarrow \mathbb{R}$
defined as
\begin{equation}
\varphi(x)=
\left\{
\begin{array}{ll}
1-x&\text{for } x<0\\
e^{-x}&\text{for }x \ge 0.
\end{array}
\right.
\label{eq:phi-def}
\end{equation}
Let $\alpha\in (0,1]$. Then the gradient descent operator
\[
t(x)=x-\alpha \varphi'(x)
\]
is non-expansive.

Consider the fixed-point iteration
\begin{align}
x_{k+1}&=t(x_k)\nonumber\\
&=x_k+\alpha e^{-x_k}\label{eq:fixed-counter}
\end{align}
with starting point $x_0=0$.
It is easy to see that $x_k\ge 0 $ for all $k=0,1,\dots$.
The iterates satisfy
\[
x_k=\log k+\mathcal{O}(1).
\]
\begin{lem}
\label{lem:iter-asymp}
For $k=0,1,\dots$, the $x_k$ iterates of \eqref{eq:fixed-counter}
satisfy
\[
\log  (k+1)+\log \alpha\le x_k
\le \log  (k+1)+\log 2.
\]
\end{lem}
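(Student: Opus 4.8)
The plan is to study the sequence $u_k := e^{x_k}$ rather than $x_k$ directly, because the iteration becomes much more transparent after this substitution. From $x_{k+1}=x_k+\alpha e^{-x_k}$ one computes
\[
u_{k+1}=e^{x_{k+1}}=u_k\,e^{\alpha/u_k},\qquad u_0=1,
\]
and the two claimed inequalities are, after exponentiating, precisely $\alpha(k+1)\le u_k\le 2(k+1)$. As a preliminary step I would record that $x_k\ge 0$ for every $k$ --- immediate by induction, since $x_0=0$ and the increment $\alpha e^{-x_k}$ is positive --- so that $u_k\ge 1$ and hence the argument $\alpha/u_k$ always lies in $(0,1]$; this containment is what makes the sharp estimates below available.

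For the lower bound I would apply the elementary inequality $e^t\ge 1+t$ (valid for all real $t$) with $t=\alpha/u_k$, which gives the additive increment bound $u_{k+1}\ge u_k+\alpha$. Telescoping from $u_0=1$ yields $u_k\ge 1+k\alpha$, and since $\alpha\le 1$ we have $1\ge\alpha$, so $1+k\alpha\ge\alpha(k+1)$. Taking logarithms gives exactly $x_k\ge \log(k+1)+\log\alpha$.

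The upper bound is the delicate part and the main obstacle. The inequality $e^t\ge 1+t$ now points the wrong way, and a crude replacement such as $u_{k+1}-u_k\le\alpha e^{\alpha}$ only produces a bound of the form $u_k\le 1+ek$, whose slope $e$ exceeds $2$ and therefore fails to stay below $2(k+1)$ for large $k$. The resolution is to exploit that $t=\alpha/u_k$ lies in $[0,1]$ and to invoke convexity of $e^t$ on that interval: a convex function lies below the chord joining its endpoints, so $e^t\le 1+(e-1)t$ for $t\in[0,1]$. This yields the sharp increment bound $u_{k+1}\le u_k+(e-1)\alpha$, hence $u_k\le 1+(e-1)\alpha k$ after telescoping. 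Finally I would close the estimate using $\alpha\le 1$ together with $e<3$: the inequality $1+(e-1)k\le 2(k+1)$ rearranges to $(e-3)k\le 1$, which holds for every $k\ge 0$ because $e-3<0$. Thus $u_k\le 2(k+1)$, and taking logarithms gives $x_k\le\log(k+1)+\log 2$, completing the argument.
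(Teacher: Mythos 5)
Your proof is correct, and it takes a genuinely different route from the paper's. The paper argues by direct induction on the stated inequalities in the $x$-variable: it plugs the inductive bounds into the update map $x\mapsto x+\alpha e^{-x}$, using that this map is nondecreasing on $[\log\alpha,\infty)$, and then absorbs the resulting increments $\tfrac{1}{k+1}$ and $\tfrac{\alpha}{2(k+1)}$ via first-order Taylor (concavity) estimates for $\log$, namely $\log(k+1)+\tfrac{1}{k+1}\ge\log(k+2)$ and $\log(k+2)-\tfrac{1}{k+2}\ge\log(k+1)$. Your substitution $u_k=e^{x_k}$ converts the same induction into a telescoping of constant additive increments: the global bound $e^t\ge 1+t$ gives $u_{k+1}\ge u_k+\alpha$, and the chord bound $e^t\le 1+(e-1)t$ on $[0,1]$ --- legitimately applicable since you first establish $u_k\ge 1\ge\alpha$, so $\alpha/u_k\in(0,1]$ --- gives $u_{k+1}\le u_k+(e-1)\alpha$; the conclusion then reduces to comparing linear sequences, with $\alpha\le 1$ and $e<3$ closing both ends ($(e-3)k\le 1$ holds trivially for $k\ge 0$). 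In a sense the two arguments are dual: your convexity-of-$\exp$ chord estimate plays exactly the role, transported through the change of variable, that the concavity-of-$\log$ tangent estimates play in the paper. What your version buys is transparency about the constants --- it makes visible that the upper bound works precisely because the worst-case slope $e-1$ is strictly below $2$ --- and it dispenses with the monotonicity argument for $x+\alpha e^{-x}$, needing only $x_k\ge 0$. The paper's version stays in the variable of interest and follows the standard ``plug the hypothesis into a monotone update'' induction template, which adapts more readily to updates that do not linearize so cleanly under exponentiation. Your aside about why the crude increment bound $u_{k+1}-u_k\le\alpha e^{\alpha}$ fails (slope $e>2$) is also correct and usefully motivates the chord trick.
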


\begin{proof}
We prove this by induction.
Since $\alpha\in (0,1]$ and $x_0=0$, the base case
holds trivially.

Now assume the statement holds for $k$. Then
\begin{align*}
x_k&\ge \log \alpha + \log(k+1)\\
x_{k+1}&\ge\log \alpha + \log(k+1)+\frac{1}{k+1}\\
&\ge \log \alpha+\log(k+2).
\end{align*}
The second line follows from plugging the first line into
the function  $x+\alpha e^{-x}$,
which is nondecreasing on $x\in[\log \alpha,\infty)$.
The third line follows from the inequality
\[
\log (k+1)+\frac{1}{k+1}\ge
\log(k+2),
\]
which in turn follows from 
upper bounding the concave function $\log$ 
with its first order Taylor expansion about $k+1$.

Similarly, we have
\begin{align*}
x_k&\le \log(k+1)+\log 2\\
x_{k+1}&\le \log(k+1)+\log 2+\frac{\alpha}{2(k+1)}\\
&\le \log(k+1)+\log 2+\frac{1}{2(k+1)}\\
&\le \log(k+1)+\log 2+\frac{1}{k+2}\\
&\le \log(k+2)+\log 2.
\end{align*}
Again, the second line follows from plugging the first line into
$x+\alpha e^{-x}$.
The third line simply follows from $\alpha\le 1$.
The fourth line follows from the inequality 
\[
\frac{1}{2(k+1)}\le 
\frac{1}{k+2}
\]
for $k\ge 0$.
The fifth line follows from 
\[
 \log (k+2)-\frac{1}{k+2}\ge \log (k+1),
\]
which in turn follows from 
upper bounding the concave function $\log$ 
with its first order Taylor expansion about $k+2$.
\end{proof}

\subsection{Main construction}
Consider the standard $\ell^2$ space, the space of 
square-summable (real) sequences with inner product
\[
\langle x,y\rangle =\sum^\infty_{i=1}x_iy_i.
\]

Consider the convex function $f:\ell^2\rightarrow \mathbb{R}$
\[
f(x)=\sum^\infty_{i=1}\frac{1}{i^2}\varphi(x_i),
\]
where $\varphi$ is as defined in \eqref{eq:phi-def}.
Consider the operator
\[
T(x)=x-\nabla f(x)
\]
which is
\[
(T(x))_i=x_i-\frac{1}{i^2}\varphi'(x_i)
\]
for $i=1,2,\dots$.
Since the mapping of each coordinate $x_i\mapsto x_i-(1/i^2) \varphi'(x_i)$ is non-expansive,
$T:\ell^2\rightarrow \ell^2$ is non-expansive.

Now consider the fixed-point iteration
\[
x^{k+1}=T(x^k)
\]
with starting point $x^0=0\in \ell^2$.
This simplifies to
\[
(x^{k+1})_i=x^k_i+(1/i^2)e^{-x^k_i}
\]
for $i=1,2,\dots$.
Lemma~\ref{lem:iter-asymp} states
\[
x^k_i=\log k+\mathcal{O}(1)
\]
as $k\rightarrow\infty$ for fixed $i$.

This coordinate-wise rate implies 
\[
(1/\|x^k\|)x^k_i\rightarrow 0
\]
as $k\rightarrow\infty$ for fixed $i$.
To see why, pick any $N\in \mathbb{N}$ and  we get
\[
\|x^k\|^2\ge (|x^k_1|^2+\dots+|x^k_N|^2)\ge N(\log k)^2+\mathcal{O}(1).
\]
Therefore
\[
\limsup_{k\rightarrow\infty}(1/\|x^k\|)|x^k_i|\le 1/\sqrt{N},
\]
and we let $N\rightarrow\infty$ to conclude $(1/\|x^k\|)x^k_i\rightarrow 0$.
Since each coordinate of the sequence $\{(1/\|x^k\|)x^k\}_{k\ge 0}\subset \ell^2$ 
converges to $0$, we have
\[
(1/\|x^k\|)x^k\rightharpoonup 0.
\]

On the other hand, 
Fact~\ref{lem:sw-conv} and
the fact that $(1/\|x^k\|)x^k$
has unit norm for $k>0$ implies
\[
(1/\|x^k\|)x^k\not\rightarrow 0.
\]
Therefore, $\{x^k\}_{k\ge 0}$ 
cosmically converges to $0$ weakly but not strongly.

\section{Positive results}
\label{s:pos}
In this section, we presents positive results
towards characterizing cosmic accumulation points.
Section~\ref{ss:res}
states the results
and Section~\ref{ss:fixed-pt-inf}
provides their interpretation.
Section~\ref{ss:existing-work}
relates these results to past work.

\subsection{Results}
\label{ss:res}
\begin{thm}
\label{thm:sep-hyp}
For any $q\in Q$ and $x\in \cH$, 
\[
\langle T(x)-x,q\rangle \ge 0.
\]
So $q$
defines a separating hyperplane
between $\overline{\ran(I-T)}$ and $\{0\}$,
if $q\ne 0$.
\end{thm}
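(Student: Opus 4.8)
The plan is to prove the inequality $\langle T(x)-x,q\rangle\ge 0$ directly from non-expansiveness of $T$ together with the weak convergence defining $q$, and then read off the separating-hyperplane statement. Fix $x\in\cH$ and write $u=x-T(x)$, so that $\langle T(x)-x,q\rangle\ge 0$ is equivalent to $\langle u,q\rangle\le 0$; this is what I will establish. Since $q\in Q$, there is a subsequence with $(1/\|x^{k_j}\|)x^{k_j}\rightharpoonup q$. The standing assumption $0\in\overline{\ran(I-T)}\setminus\ran(I-T)$ forces $v=0$, so the result of Baillon et al.\ quoted in the introduction gives $\|x^{k+1}-x^k\|\to 0$, which will be the crucial ingredient.

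First I would derive a monotonicity inequality. Applying non-expansiveness to the pair $x^{k-1},x$ gives $\|x^k-T(x)\|\le\|x^{k-1}-x\|$; writing $x^k-T(x)=(x^k-x)+u$ and expanding the square yields
\[
2\langle x^k-x,u\rangle+\|u\|^2\le \|x^{k-1}-x\|^2-\|x^k-x\|^2.
\]
Setting $a_k=\|x^k-x\|^2$, the right-hand side is $a_{k-1}-a_k=\langle x^{k-1}-x^k,\,x^{k-1}+x^k-2x\rangle$.

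The key estimate, and the step I expect to be the main obstacle, is to show $(a_{k-1}-a_k)/\|x^k\|\to 0$. By Cauchy--Schwarz, $|a_{k-1}-a_k|\le \|x^{k-1}-x^k\|\,\|x^{k-1}+x^k-2x\|$; dividing by $\|x^k\|$, the factor $\|x^{k-1}+x^k-2x\|/\|x^k\|$ stays bounded, tending to $2$ because $\|x^{k-1}\|/\|x^k\|\to 1$ (which follows exactly as in Lemma~\ref{lem:cosmic-acc}, since $\|x^{k-1}-x^k\|$ is bounded and $\|x^k\|\to\infty$), while the factor $\|x^{k-1}-x^k\|\to 0$. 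The product therefore vanishes. This is precisely where $v=0$ enters: without it the asymptotic displacement would be nonzero and this ratio would not die, which is consistent with the $v\ne0$ case already being handled by Fact~\ref{lem:cosmic-strong}.

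Finally I would divide the monotonicity inequality by $\|x^k\|$ and pass to the limit along $k=k_j$. The term $\langle (x^{k_j}-x)/\|x^{k_j}\|,u\rangle$ splits as $\langle x^{k_j}/\|x^{k_j}\|,u\rangle-\|x^{k_j}\|^{-1}\langle x,u\rangle$, whose first piece converges to $\langle q,u\rangle$ by weak convergence tested against the fixed vector $u$ and whose second piece vanishes; the term $\|u\|^2/\|x^{k_j}\|$ vanishes; and the right-hand side vanishes by the key estimate. What remains is $2\langle q,u\rangle\le 0$, i.e.\ $\langle u,q\rangle\le 0$, which is the claimed inequality. Since $u=x-T(x)$ ranges over $\ran(I-T)$ as $x$ ranges over $\cH$, continuity of $\langle\cdot,q\rangle$ extends $\langle u,q\rangle\le 0$ to all $u\in\overline{\ran(I-T)}$; combined with $\langle 0,q\rangle=0$, this exhibits the hyperplane through the origin with normal $q$ as separating $\overline{\ran(I-T)}$ from $\{0\}$ whenever $q\ne 0$.
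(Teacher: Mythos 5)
Your proof is correct, and it rests on the same two pillars as the paper's: a quadratic expansion of the non-expansiveness inequality, normalized by $\|x^{k_j}\|$ and passed to the weak limit, together with the vanishing of successive displacements $x^{k+1}-x^k\rightarrow -v=0$ from Baillon et al. The implementation differs, though. The paper pairs $x$ with $x^{k_j}$ at the \emph{same} index, decomposing $T(x)-T(x^{k_j})=(x-x^{k_j})+\bigl(T(x)-x-(T(x^{k_j})-x^{k_j})\bigr)$, so that after the terms $\|x-x^{k_j}\|^2$ cancel, the only error terms carry the residual $T(x^{k_j})-x^{k_j}\rightarrow 0$, and dividing by $\|x^{k_j}\|$ finishes in one stroke. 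You instead pair $x$ with $x^{k-1}$ and evaluate at $x^k=T(x^{k-1})$, which produces the Fej\'er-type telescoping inequality $2\langle x^k-x,u\rangle+\|u\|^2\le a_{k-1}-a_k$ and obliges you to prove the extra estimate $(a_{k-1}-a_k)/\|x^k\|\rightarrow 0$ via the identity $a_{k-1}-a_k=\langle x^{k-1}-x^k,\,x^{k-1}+x^k-2x\rangle$ and Cauchy--Schwarz; that estimate is sound (boundedness of $\|x^{k-1}+x^k-2x\|/\|x^k\|$ is all you need, though your claim that it tends to $2$ is also true, by the argument of Lemma~\ref{lem:cosmic-acc}). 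Your route buys two small things: it makes explicit the reliance on $x^{k+1}-x^k\rightarrow 0$, which the paper absorbs into the phrase ``by definition of $Q$'' when asserting $\|T(x^{k_j})-x^{k_j}\|\rightarrow 0$, and it records the monotonicity inequality $2\langle x^k-x,u\rangle+\|u\|^2\le a_{k-1}-a_k$, the same inequality one would telescope in the classical fixed-point setting. You also spell out the closure step for the hyperplane conclusion, which the paper leaves implicit. The paper's version is shorter because the vanishing residual is injected directly into the expansion rather than recovered from consecutive distance squares.
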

\begin{proof}
By definition of $Q$, 
the iterates
$\{x^k\}_{k\ge 0}$ of \eqref{eq:fixed-iter}
has a subsequence
$\{x^{k_j}\}_{j\ge 0}$ such that
\[
(1/\|x^{k_j}\|)x^{k_j}\rightharpoonup q,
\quad
\|x^{k_j}\|\rightarrow\infty,
\quad
\|T(x^{k_j})-x^{k_j}\|\rightarrow 0.
\]
For any $x\in \cH$, we have
\begin{align*}
\|x-x^{k_j}\|^2
&\ge
\|T(x)-T(x^{k_j})\|^2\\
&=
\|x-x^{k_j}\|^2
+
2\langle
x-x^{k_j},
T(x)-x-T(x^{k_j})+x^{k_j}
\rangle\\
&
\qquad
+
\|T(x)-x-T(x^{k_j})+x^{k_j}\|^2.
\end{align*}
We reorganize this to get
\begin{align*}
&-\frac{1}{\|x^{k_j}\|}
\|T(x)-x-T(x^{k_j})+x^{k_j}\|^2\\
&\qquad\qquad\qquad\ge\frac{2}{\|x^{k_j}\|}
\langle
x-x^{k_j},
T(x)-x-T(x^{k_j})+x^{k_j}
\rangle.
\end{align*}
By taking the limit $j\rightarrow\infty$ we get
\[
0
\ge
-\langle
q,
T(x)-x
\rangle.
\]
\end{proof}

\begin{cor}
\label{cor:main}
If $q\in Q$ and $q\ne 0$, then
the iterates of \eqref{eq:fixed-iter} satisfy
\[
\langle x^{k},q\rangle \rightarrow\infty
\]
monotonically as $k\rightarrow\infty$
for any starting point
$x^0\in \cH$.
\end{cor}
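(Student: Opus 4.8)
The plan is to reduce the claim to two observations about the scalar sequence $\{\langle x^k, q\rangle\}_{k\ge 0}$: that it is monotonically nondecreasing, and that some subsequence of it diverges to $+\infty$. A nondecreasing real sequence admitting a subsequence that tends to $+\infty$ must itself tend to $+\infty$, so these two facts together give the corollary.

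First I would establish monotonicity directly from Theorem~\ref{thm:sep-hyp}. Applying that theorem with $x = x^k$ gives $\langle T(x^k) - x^k, q\rangle \ge 0$, and since $x^{k+1} = T(x^k)$ this reads $\langle x^{k+1}, q\rangle - \langle x^k, q\rangle \ge 0$. Hence $\{\langle x^k, q\rangle\}_{k\ge 0}$ is nondecreasing, and this holds for every starting point $x^0$ because the underlying inequality is pointwise in $k$.

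Next I would produce a subsequence along which the inner product blows up. By Lemma~\ref{lem:well-def} the set $Q$ is independent of the starting point, so for the chosen $x^0$ there is a subsequence $\{x^{k_j}\}_{j \ge 0}$ with $\|x^{k_j}\| \to \infty$ and $(1/\|x^{k_j}\|)x^{k_j} \rightharpoonup q$. Testing this weak convergence against the fixed vector $q$ gives $\langle (1/\|x^{k_j}\|) x^{k_j}, q\rangle \to \langle q, q\rangle = \|q\|^2$, which is strictly positive since $q \ne 0$. Writing $\langle x^{k_j}, q\rangle = \|x^{k_j}\| \cdot \langle (1/\|x^{k_j}\|)x^{k_j}, q\rangle$ and combining $\|x^{k_j}\| \to \infty$ with the strictly positive limit of the second factor yields $\langle x^{k_j}, q\rangle \to +\infty$.

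I do not expect a genuinely hard step here; the proof is short once Theorem~\ref{thm:sep-hyp} is in hand. The only point requiring care is that both ingredients remain valid for an \emph{arbitrary} starting point, as the statement demands: the monotonicity is guaranteed by the pointwise separating inequality of Theorem~\ref{thm:sep-hyp}, while the existence of a subsequence weakly converging to $q$ is guaranteed by the starting-point-independence of $Q$ established in Lemma~\ref{lem:well-def}. Assembling the nondecreasing property with the subsequential divergence then completes the argument.
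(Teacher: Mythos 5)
Your proposal is correct and follows essentially the same route as the paper's own proof: monotonicity of $\{\langle x^k,q\rangle\}_{k\ge 0}$ from Theorem~\ref{thm:sep-hyp}, subsequential divergence from the weak convergence $(1/\|x^{k_j}\|)x^{k_j}\rightharpoonup q$ with $\|q\|^2>0$, and the elementary fact that a nondecreasing sequence with a divergent subsequence diverges. If anything, you are slightly more explicit than the paper in citing Lemma~\ref{lem:well-def} for starting-point independence and in noting where $q\ne 0$ is used, both of which the paper leaves implicit.
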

\begin{proof}
Since
\[
\langle x^{k+1}-x^k,q\rangle
\ge 0
\]
by Theorem~\ref{thm:sep-hyp},
the scalar sequence
$\{\langle x^{k},q\rangle\}_{k\ge 0} $
is nondecreasing.
Since
$q\in Q$, there is a subsequence
$\{x^{k_j}\}_{j\ge 0}$ such that 
\[
(1/\|x^{k_j}\|)x^{k_j}\rightharpoonup q.
\]
Therefore, we have
\[
(1/\|x^{k_j}\|)\langle x^{k_j},q\rangle\rightarrow 
\langle q,q\rangle
\]
and $\langle x^{k_j},q\rangle\rightarrow\infty$ as $j\rightarrow\infty$.
Since 
$\{\langle x^{k},q\rangle\}_{k\ge 0} $
is nondecreasing, and since a subsequence of it converges to $\infty$, the whole sequence converges to $\infty$.
\end{proof}

\begin{cor}
If $q_1,q_2\in Q$ then
\[
\langle q_1,q_2\rangle \ge 0.
\]
\end{cor}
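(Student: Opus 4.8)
The plan is to reduce everything to the sign of a single weakly convergent sequence of inner products. First I would dispose of the degenerate case: if either $q_1=0$ or $q_2=0$, then $\langle q_1,q_2\rangle=0\ge 0$ and there is nothing to prove. So I would assume from now on that $q_1\ne 0$ and $q_2\ne 0$, which is precisely the hypothesis under which Corollary~\ref{cor:main} is available.

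Next I would exploit the asymmetry between the two accumulation points. Since $q_2\in Q$, by definition of $Q$ there is a subsequence $\{x^{k_j}\}_{j\ge 0}$ of the iterates of \eqref{eq:fixed-iter} with
\[
(1/\|x^{k_j}\|)x^{k_j}\rightharpoonup q_2,\qquad \|x^{k_j}\|\rightarrow\infty.
\]
Testing this weak convergence against the \emph{fixed} vector $q_1$ gives
\[
\frac{1}{\|x^{k_j}\|}\langle x^{k_j},q_1\rangle
=\Big\langle (1/\|x^{k_j}\|)x^{k_j},\,q_1\Big\rangle
\longrightarrow \langle q_2,q_1\rangle=\langle q_1,q_2\rangle
\]
as $j\rightarrow\infty$.

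The remaining point is to show that the terms on the left are eventually nonnegative, so that their limit must be nonnegative. Here I would apply Corollary~\ref{cor:main} to $q_1$ (which is legitimate because $q_1\in Q$ and $q_1\ne 0$): the whole scalar sequence $\{\langle x^k,q_1\rangle\}_{k\ge 0}$ increases to $+\infty$. In particular $\langle x^{k_j},q_1\rangle>0$ for all sufficiently large $j$, while $\|x^{k_j}\|>0$, so each term $(1/\|x^{k_j}\|)\langle x^{k_j},q_1\rangle$ is positive for large $j$. A convergent sequence that is eventually positive has a limit $\ge 0$, whence $\langle q_1,q_2\rangle\ge 0$.

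I do not expect a genuine obstacle here, but the one conceptual step worth flagging is the interplay between the two subsequences: $q_1$ and $q_2$ may be realized along entirely different subsequences of $\{x^k\}$. This is harmless precisely because Corollary~\ref{cor:main} controls the \emph{whole} sequence $\langle x^k,q_1\rangle$, so its divergence to $+\infty$ is inherited by the $q_2$-subsequence $\{x^{k_j}\}$ without any need to coordinate the two selections. The symmetry of the conclusion in $q_1$ and $q_2$ is then automatic, even though the argument treats them asymmetrically.
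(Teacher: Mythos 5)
Your proposal is correct and follows essentially the same route as the paper: realize $q_2$ along a subsequence, test the weak convergence against the fixed vector $q_1$, and invoke Corollary~\ref{cor:main} (legitimate since $q_1\in Q$, $q_1\ne 0$) to force the limit $\langle q_1,q_2\rangle$ to be nonnegative. The only cosmetic difference is that you also split off the case $q_2=0$, which the paper does not need since the weak-limit argument works for any $q_2$.
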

\begin{proof}
Assume $q_1\ne 0$ as otherwise the result is trivial.
Since
$q_2\in Q$, there is a subsequence
$\{x^{k_j}\}_{j\ge 0}$ such that 
\[
(1/\|x^{k_j}\|)x^{k_j}\rightharpoonup q_2.
\]
By Corollary~\ref{cor:main}, we have
\[
\langle q_1,x^{k_j}\rangle \rightarrow
\infty
\]
and therefore
\[
\lim_{j\rightarrow\infty}
\frac{1}{\|x_{k_j}\|}
\langle q_1,x^{k_j}\rangle
=
\langle q_1,q_2\rangle \ge 0.
\]
\end{proof}

Given a set $A\subseteq \cH$, write $A^*$ for the dual cone of $A$, i.e.,
\[
A^*=\{v\in \cH\,|\,\langle v,a\rangle \ge 0\,\text{for any }a\in A\}.
\]
\begin{thm}
\label{thm:main}
If $T:\cH\rightarrow\cH$ is non-expansive,
then
\[
 Q\subseteq 
 \overline{\cone (\ran(T-I))}\cap
 (\overline{\cone (\ran(T-I))})^*
 \subseteq 
   Q^*.
\]
\end{thm}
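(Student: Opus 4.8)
The plan is to abbreviate $C=\overline{\cone(\ran(T-I))}$ and to establish the chain $Q\subseteq C\cap C^*\subseteq Q^*$ by proving $Q\subseteq C^*$, then $Q\subseteq C$, and finally $C\cap C^*\subseteq Q^*$. The last inclusion will be essentially immediate once $Q\subseteq C$ is in hand, so the two genuine tasks are the two memberships of each $q\in Q$.

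The inclusion $Q\subseteq C^*$ is just a rereading of Theorem~\ref{thm:sep-hyp}. That theorem gives $\langle q,T(x)-x\rangle\ge 0$ for every $q\in Q$ and every $x\in\cH$, which says precisely that $q$ lies in the dual cone of $\ran(T-I)$. Since a dual cone is unchanged by passing to the generated cone and then to its closure --- $A^*=(\cone A)^*=(\overline{\cone A})^*$, because $\langle v,\cdot\rangle$ is linear and continuous --- this is exactly $q\in C^*$.

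For $Q\subseteq C$ I would telescope the iteration: $x^k-x^0=\sum_{i=0}^{k-1}(T(x^i)-x^i)$, a finite sum of elements of $\ran(T-I)$, hence a point of the convex cone $\cone(\ran(T-I))$; scaling by $1/\|x^k\|>0$ keeps it there. Passing to a subsequence with $(1/\|x^{k_j}\|)x^{k_j}\rightharpoonup q$ and using $\|x^{k_j}\|\to\infty$, so that $(1/\|x^{k_j}\|)x^0\to 0$ strongly, I get $(1/\|x^{k_j}\|)(x^{k_j}-x^0)\rightharpoonup q$. Thus $q$ is a weak limit of points of the convex cone $\cone(\ran(T-I))$, and I would invoke Mazur's theorem --- the weak and norm closures of a convex set coincide --- to conclude $q\in\overline{\cone(\ran(T-I))}=C$.

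Finally, for $C\cap C^*\subseteq Q^*$: if $p\in C\cap C^*$ then in particular $p\in C^*$, and since every $q\in Q$ belongs to $C$ by the previous paragraph, the definition of the dual cone yields $\langle p,q\rangle\ge 0$ for all $q\in Q$, i.e.\ $p\in Q^*$. I expect the only real obstacle to be the weak-to-strong passage in the $Q\subseteq C$ step, where convexity of the conic hull together with Mazur's theorem is exactly what rescues the argument in infinite dimensions; by contrast, in finite dimensions weak and strong convergence agree and this point is vacuous.
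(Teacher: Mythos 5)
Your first and third inclusions are fine and match the paper: $Q\subseteq(\overline{\cone(\ran(T-I))})^*$ is indeed just Theorem~\ref{thm:sep-hyp} combined with the identity $A^*=(\cone A)^*=(\overline{\cone A})^*$, and your hand argument for $\overline{\cone(\ran(T-I))}\cap(\overline{\cone(\ran(T-I))})^*\subseteq Q^*$ is exactly the content of the citation the paper uses (Proposition~6.24 of Bauschke--Combettes). The gap is in your proof of $Q\subseteq\overline{\cone(\ran(T-I))}$. For a general set $A$, the generated cone $\cone A=\{\lambda a \,:\, \lambda\ge 0,\ a\in A\}$ is \emph{not} convex, and in particular not closed under addition, and $\ran(T-I)$ itself has no evident convexity. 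So your telescoped sum $\sum_{i=0}^{k-1}(T(x^i)-x^i)$ does not obviously lie in $\cone(\ran(T-I))$, and your appeal to Mazur's theorem --- which applies only to convex sets --- is equally unjustified at that point. Both steps silently assume the convexity that is the actual crux of this inclusion.

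The repair needs one genuinely nontrivial input: $\overline{\ran(I-T)}$ is convex. This holds because $I-T$ is monotone (expand $\langle (I-T)x-(I-T)y,\,x-y\rangle$ and use Cauchy--Schwarz with non-expansiveness), continuous, and everywhere defined, hence maximally monotone, and the closure of the range of a maximally monotone operator is convex. Granting this, your telescoping goes through in corrected form: $(1/k)(x^k-x^0)$ is a convex combination of elements of $\ran(T-I)$, hence lies in $\operatorname{conv}(\ran(T-I))\subseteq\overline{\ran(T-I)}$; rescaling by $k/\|x^k\|$ puts $(1/\|x^k\|)(x^k-x^0)$ in $\cone\bigl(\overline{\ran(T-I)}\bigr)\subseteq\overline{\cone(\ran(T-I))}$, which is now a closed \emph{convex} cone, hence weakly closed, so it contains the weak limit $q$. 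Note that this inclusion is precisely Theorem~2.6 of Bauschke et al., which the paper does not reprove but cites (restated as its Theorem~\ref{thm:bauschke}); your proposal in effect attempts to reprove that ingredient from scratch and omits its key convexity input.
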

\begin{proof}
That $Q\subseteq \overline{\cone (\ran(T-I))}$ follows from Theorem~2.6 of \cite{Bauschke2016}.
That $Q\subseteq (\overline{\cone (\ran(T-I))})^*$
follows from Theorem~\ref{thm:sep-hyp}.
The final inclusion
follows from the fact that
when $B$ is cone, 
\[
A\subseteq B\cap B^*
\]
implies
$B^*\subseteq A^*$ and $B\subseteq A^*$,
cf.\ Proposition~6.24 of \cite{bauschke2017}.
\end{proof}

\subsection{Interpretation via points at infinity}
\label{ss:fixed-pt-inf}
In addition to the interpretation
as a separating hyperplane,
a $q\in Q$ such that $q\ne 0$
has an interpretation 
as a fixed point at infinity.

In the classical case
where $S:\cH\rightarrow\cH$
is non-expansive and has fixed point $x^\star\in \cH$, we have
\[
\|S(x)-x^\star\|\le \|x-x^\star\|
\]
for any $x\in \cH$.
In other words,
$S(x)$ stays within the 
closed ball of radius
$\|x-x^\star\|$
centered about
$x^\star$.

We have an analogous
geometric interpretation of
Theorem~\ref{thm:sep-hyp}:
$T(x)$ stays within the 
half-space
\[
\{y\in \cH\,|\,
\langle q,y\rangle \ge \langle q,x\rangle\},
\]
for any $x\in \cH$.
See Figure~\ref{fig:mapping}.
This is like saying
$T(x)$ stays within the 
closed ball of radius
$\infty$
centered about
$\infty \cdot q$, and this is the intuition
of the proof of Theorem~\ref{thm:sep-hyp}.
We can say all steps of the fixed-point iteration
\eqref{eq:fixed-iter}
proceeds forward infinitely
in the direction $q$
by Corollary~\ref{cor:main}.

Write 
$B\subset\cH$ for the closed unit ball,
$\intr B\subset\cH$ for the open unit ball,
and $S\subset \cH$ for the unit sphere.
Consider the one-to-one map
$f: \cH\rightarrow \intr B$
defined as
\begin{equation}
f(x)=\frac{1}{1+\|x\|}x.
\label{eq:ball-map}
\end{equation}
It is interesting to map the geometric interpretation
in $\cH$ onto $\intr B$ via $f$.

In Figure~\ref{fig:mapping}, 
the half-space is mapped
into a crescent-shaped image under $f$,
and $\infty\cdot q$ is mapped to $\lim_{a\rightarrow\infty}f(a q)\in S$,
the outer center of the crescent shape.
In the fixed-point iteration \eqref{eq:fixed-iter},
the half-space defined by $x^k$ moves forward
in the direction $q$ in $\cH$.
This corresponds to the crescent shape
becoming thinner, but not smaller, in $B$.

\begin{figure}
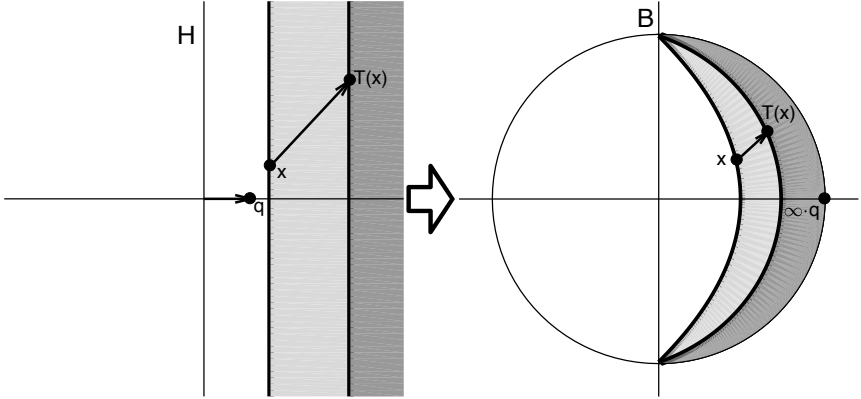

\begin{center}
$\vimage{plane_map}
\vpointer
\vimage{ball_map}$
\end{center}
\caption{
Geometric interpretation Theorem~\ref{thm:sep-hyp}
on $\mathcal{H}$ and on $B$.
The right is the image of the left under $f$ as defined in \eqref{eq:ball-map}.
}
\label{fig:mapping}
\end{figure}

\subsection{Relationship to existing work}
Lins, Gaubert, and Vigeral presented results
related to Theorem~\ref{thm:sep-hyp}.
Under the setup and notation of this paper,
they are as follows.
\label{ss:existing-work}
\begin{thm}[Theorem~1.2 of \cite{lins2009}]
\label{thm:lins}
Let $T:\mathbb{R}^n\rightarrow\mathbb{R}^n$ be non-expansive
with no fixed points.
Then there is a $\nu\in \mathbb{R}^n$ such that
\[
\langle \nu,x^k\rangle\rightarrow \infty
\]
for any starting point $x^0$.
\end{thm}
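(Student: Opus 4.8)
The plan is to recognize that this statement is essentially a repackaging of Corollary~\ref{cor:main} together with the finite-dimensional nonemptiness of $Q$, the one caveat being that Lins' theorem imposes no condition on the minimal displacement vector $v$, whereas the positive results of this paper were developed under the standing assumption $v=0$. So I would fix a candidate $\nu$ up front, dispose of the case $v=0$ with the machinery already built, and then handle $v\ne 0$ separately using Pazy's classical result.

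First I would invoke Corollary~\ref{fact:finite-dim}: since $\cH=\mathbb{R}^n$ is finite dimensional, $Q$ is nonempty and every $q\in Q$ satisfies $\|q\|=1$. I would therefore fix $\nu:=q$ for some $q\in Q$, noting $\nu\ne 0$. Crucially, by Lemma~\ref{lem:well-def} this $\nu$ depends only on $T$ and not on $x^0$, which is exactly what is needed to match the ``for any starting point'' quantifier in the statement. In the case $v=0$, Corollary~\ref{cor:main} applies verbatim: since $\nu\in Q$ and $\nu\ne 0$, it yields $\langle \nu,x^k\rangle\rightarrow\infty$ (indeed monotonically) for every starting point, which is the claim.

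For the remaining case $v\ne 0$, the machinery of Section~\ref{ss:res} is not directly available, so I would fall back on Pazy's theorem, $(1/k)x^k\rightarrow -v$. Here Fact~\ref{lem:cosmic-strong} gives $Q=\{-v/\|v\|\}$, so the same $\nu=-v/\|v\|$ is still the natural choice. Pairing with $\nu$ and dividing by $k$, I would compute $(1/k)\langle \nu,x^k\rangle\rightarrow \langle \nu,-v\rangle=\|v\|>0$, whence $\langle \nu,x^k\rangle\rightarrow\infty$. Thus in both cases the single choice $\nu\in Q$ works, with the justification supplied by Corollary~\ref{cor:main} when $v=0$ and by Pazy when $v\ne 0$.

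The point requiring care is not a hard estimate but the bookkeeping on hypotheses: Theorem~\ref{thm:sep-hyp} and Corollary~\ref{cor:main} rest on $\|T(x^{k_j})-x^{k_j}\|\rightarrow 0$, which is precisely the statement $v=0$, so they cannot be quoted when $v\ne 0$, forcing the case split and the appeal to \cite{Pazy1971}. The other essential ingredient is finite-dimensionality itself: it is exactly what forces $\|q\|=1$ and hence $\nu\ne 0$. The infinite-dimensional construction of Section~\ref{s:count2}, where $Q=\{0\}$, shows that no such $\nu$ need exist in general, so the theorem genuinely uses $\cH=\mathbb{R}^n$ rather than an arbitrary Hilbert space.
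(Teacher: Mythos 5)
Your proof is correct and is essentially the paper's own route: the paper does not prove this statement (it is quoted from Lins) but remarks in Section~\ref{ss:existing-work} that Corollaries~\ref{cor:main} and~\ref{fact:finite-dim} subsume it, which is precisely your argument in the case $v=0$, with $\nu=q\in Q$ of unit norm and independence from $x^0$ via Lemma~\ref{lem:well-def}. Your explicit case split for $v\ne 0$, handled through Pazy's limit $(1/k)x^k\rightarrow -v$ (equivalently Fact~\ref{lem:cosmic-strong}) to get $(1/k)\langle\nu,x^k\rangle\rightarrow\|v\|>0$, is a correct and genuinely needed supplement, since Corollary~\ref{cor:main} is established under the paper's standing assumption $0\in\overline{\ran(I-T)}\backslash\ran(I-T)$ while Lins' theorem imposes no such restriction --- a point the paper's subsumption remark leaves implicit.
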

\begin{thm}[Corollary~21 of \cite{gaubert2012}]
\label{thm:gaubert}
Let $T:\mathbb{R}^n\rightarrow\mathbb{R}^n$ be non-expansive 
with no fixed points.
Then there is a $\nu\in \mathbb{R}^n$ such that
\[
\langle \nu,x^0\rangle\le
\langle \nu,x^1\rangle\le
\langle \nu,x^2\rangle\le
\dots
\]for any starting point $x^0$.
\end{thm}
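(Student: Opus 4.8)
The plan is to exhibit the vector $\nu$ explicitly by splitting on whether the minimal displacement vector $v$ vanishes, and in each case to reduce the claim to monotonicity already established in this paper. The decisive preliminary observation is that, since the ambient space is $\mathbb{R}^n$ and $T$ has no fixed point, Corollary~\ref{fact:finite-dim} guarantees that the cosmic accumulation set $Q$ is a nonempty subset of the unit sphere $S$; in particular every $q\in Q$ is nonzero. This is the structural fact that finite-dimensionality buys us, and it is exactly what fails in the infinite-dimensional construction of Section~\ref{s:count2}.

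First I would treat the case $v\ne 0$ by taking $\nu=-v$ directly. The key ingredient is the variational characterization of the projection $v=P_{\overline{\ran(I-T)}}(0)$, which gives $\langle v,u\rangle\ge\|v\|^2$ for every $u\in\overline{\ran(I-T)}$. Applying this to $u=x^k-T(x^k)=x^k-x^{k+1}\in\ran(I-T)$ yields
\[
\langle \nu,x^{k+1}-x^k\rangle=\langle v,x^k-x^{k+1}\rangle\ge\|v\|^2>0,
\]
so $\langle\nu,x^k\rangle$ is strictly increasing. This settles the monotonicity demanded by Theorem~\ref{thm:gaubert} and, as a bonus, forces $\langle\nu,x^k\rangle\to\infty$, recovering Theorem~\ref{thm:lins} in this case.

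Next I would treat the case $v=0$, where the standing hypotheses of this paper are precisely in force, so Corollary~\ref{cor:main} applies verbatim. Choosing any $q\in Q$ (nonempty and nonzero by the opening observation) and setting $\nu=q$, Corollary~\ref{cor:main} gives that $\langle\nu,x^k\rangle$ is nondecreasing and in fact tends to $\infty$. Combining the two cases, a nonzero $\nu$ is always produced, so the result holds uniformly over starting points, since $Q$ itself is independent of $x^0$ by Lemma~\ref{lem:well-def}.

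I expect the main obstacle to be the $v=0$ case, because there the naive candidate $\nu=-v$ collapses to the useless $\nu=0$, and one must instead extract a genuine direction from the iterates themselves. This is exactly where finite-dimensionality is indispensable: it is the compactness of the sphere that forces $Q$ to meet $S$ and hence to contain a unit vector; without it, as the infinite-dimensional counterexample shows, the only cosmic accumulation point can be $0$ and no such $\nu$ need exist. The remaining work is routine, namely verifying the projection inequality and confirming that Corollary~\ref{cor:main} is stated under exactly the hypotheses active when $v=0$.
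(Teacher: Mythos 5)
Your proof is correct, but note that the paper does not prove this statement at all: Theorem~\ref{thm:gaubert} is imported verbatim from Corollary~21 of \cite{gaubert2012}, where it is established for general finite-dimensional normed spaces by entirely different means (a maximin characterization of the escape rate of non-expansive maps), and the paper merely remarks in Section~\ref{ss:existing-work} that Corollaries~\ref{cor:main} and~\ref{fact:finite-dim} subsume it in the Hilbert setting. Your argument essentially fleshes out that one-line remark into a complete self-contained proof, and it does so carefully: the case split on $v$ is necessary because Corollary~\ref{cor:main} (via Theorem~\ref{thm:sep-hyp}, whose proof uses $\|T(x^{k_j})-x^{k_j}\|\rightarrow 0$) is only available under the standing assumption $0\in\overline{\ran(I-T)}\setminus\ran(I-T)$, i.e., when $v=0$; your direct treatment of the $v\ne 0$ case via the variational inequality $\langle v,u\rangle\ge\|v\|^2$ for $u\in\overline{\ran(I-T)}$ is exactly the right patch, and it even yields strict monotonicity with increments bounded below by $\|v\|^2$, which is stronger than what the theorem asserts. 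Two small points deserve explicit mention rather than the label ``routine'': the variational inequality requires convexity of $\overline{\ran(I-T)}$, which is a nontrivial fact for non-expansive $T$ (it underlies \cite[Lemma~4]{Pazy1971}, which the paper invokes for the existence and uniqueness of $v$); and the uniformity of $\nu$ over starting points, which you correctly route through Lemma~\ref{lem:well-def} in the $v=0$ case and which is automatic for $\nu=-v$. The trade-off between the two approaches is clear: the original proof in \cite{gaubert2012} is more general (arbitrary norms on $\mathbb{R}^n$, where projections and inner products are unavailable), while yours is Hilbert-specific but constructive, identifying $\nu$ explicitly as $-v$ or as a unit-norm cosmic accumulation point, which is precisely the advantage the paper claims for its Corollary~\ref{cor:main}.
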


In their original papers,
Theorem~\ref{thm:lins} and \ref{thm:gaubert}
are more general,
as they hold 
for finite dimensional Banach spaces.
In the case of finite dimensional Hilbert spaces,
Corollaries~\ref{cor:main} and \ref{fact:finite-dim} subsumes 
Theorem~\ref{thm:lins} and \ref{thm:gaubert}.
Furthermore, Corollary~\ref{cor:main}
is more constructive than Theorem~\ref{thm:lins} and \ref{thm:gaubert},
as it provides an explicit characterization of
the linear functional $\nu$.

Bauschke, Douglas, and Moursi presented
various results towards understanding
the conditions for cosmic convergence, including
Theorem~\ref{thm:bauschke}.
The presented Theorem~\ref{thm:main} builds on and strengthens
Theorem~\ref{thm:bauschke}.
\begin{thm}[Theorem~2.6 of \cite{Bauschke2016}]
\label{thm:bauschke}
If $T:\cH\rightarrow \cH$ is non-expansive, then
\[
 Q\subseteq 
 \overline{\cone (\ran(T-I))}.
\]
\end{thm}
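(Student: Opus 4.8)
The plan is to exploit the telescoping structure of the iteration together with the convexity of the conical hull. First I would express each iterate as a running sum of displacement vectors. Since $x^{j+1}-x^j = T(x^j)-x^j \in \ran(T-I)$, telescoping gives
\[
x^k - x^0 = \sum_{j=0}^{k-1}\bigl(T(x^j)-x^j\bigr),
\]
a finite sum of elements of $\ran(T-I)$. Because $\cone(\ran(T-I))$ is a convex cone, it is closed under addition, so $y^k := x^k - x^0 \in \cone(\ran(T-I))$; being closed under multiplication by positive scalars, it also contains $(1/\|y^k\|)y^k$ for every $k$ with $y^k\ne 0$.

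Next I would replace $y^k$ by $x^k$ at the level of accumulation points. Since $T$ has no fixed point, $\|x^k\|\to\infty$ and hence $\|y^k\|\to\infty$, while $\|x^k - y^k\| = \|x^0\|$ is bounded. Lemma~\ref{lem:cosmic-acc} then guarantees that $\{(1/\|x^k\|)x^k\}_{k\ge 0}$ and $\{(1/\|y^k\|)y^k\}_{k\ge 0}$ have the same weak accumulation points. Thus every $q\in Q$ is a weak accumulation point of the sequence $\{(1/\|y^k\|)y^k\}_{k\ge 0}$, all of whose terms lie in $\cone(\ran(T-I))$.

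Finally I would close the set using convexity. A weak accumulation point of a sequence contained in the convex set $\cone(\ran(T-I))$ necessarily lies in its weak closure, and by Mazur's theorem the weak closure of a convex set equals its norm closure. Therefore $q\in \overline{\cone(\ran(T-I))}$, which is the assertion.

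I expect the single delicate passage to be this last one, from weak accumulation to membership in the norm closure. In finite dimensions it is automatic, since the weak and norm topologies agree and the normalized iterates live on the compact sphere $S$; in the infinite-dimensional setting it genuinely requires both the convexity of $\cone(\ran(T-I))$ and Mazur's theorem. As a robustness check I would confirm the convention for $\cone$: the argument needs the conical hull to be closed under finite addition, which holds for the convex conical hull used elsewhere in this paper (as in the dual-cone reasoning of Theorem~\ref{thm:main}). Should one prefer to avoid relying on that convention, one may instead observe that $\overline{\ran(T-I)}$ is convex, since $I-T$ is monotone for non-expansive $T$, which already forces $\overline{\cone(\ran(T-I))}$ to be a closed convex cone.
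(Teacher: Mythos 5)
Your proof is correct. There is, however, no in-paper proof to compare it against: the paper states this result as an import, quoting Theorem~2.6 of Bauschke, Douglas, and Moursi \cite{Bauschke2016} without argument, and your telescoping proof is essentially the one in that cited source. The chain is sound: $x^k-x^0=\sum_{j=0}^{k-1}\bigl(T(x^j)-x^j\bigr)$ places $y^k$ in the convex conical hull of $\ran(T-I)$; since $\|x^k-y^k\|=\|x^0\|$ is constant while $\|x^k\|,\|y^k\|\rightarrow\infty$, Lemma~\ref{lem:cosmic-acc} transfers weak accumulation points between the two normalized sequences; and Mazur's theorem (a norm-closed convex set is weakly closed) correctly handles the infinite-dimensional passage from weak accumulation to membership in the norm closure. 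One precision on your fallback: monotonicity of $I-T$ alone does not yield convexity of $\overline{\ran(I-T)}$ — that conclusion requires \emph{maximal} monotonicity, which does hold here by Minty's theorem because $I-T$ is continuous and monotone on all of $\cH$; this is the same fact that underlies the definition of the minimal displacement vector $v=P_{\overline{\ran(I-T)}}(0)$ via Pazy's Lemma~4. With that one-word repair, your robustness check goes through under either convention for $\cone$: convexity of $\overline{\ran(T-I)}$ puts the average $(1/k)\sum_{j=0}^{k-1}\bigl(T(x^j)-x^j\bigr)$ in $\overline{\ran(T-I)}$, hence $y^k\in\cone\bigl(\overline{\ran(T-I)}\bigr)\subseteq\overline{\cone(\ran(T-I))}$, and the latter is a closed convex cone, so the Mazur step applies verbatim.
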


\section{Conclusion}
We discussed 
counter examples showing that
the cosmic convergence conjecture is false
and that weak cosmic convergence can happen
in infinite dimensions.
We then discussed some 
new positive results relating to 
cosmic convergence, which allow us to
interpret cosmic accumulation points
as fixed points at infinity.

Despite the counter examples, however,
the author still believes
the notion of cosmic convergence is useful
in understanding the asymptotic behavior
of the fixed-point iteration \eqref{eq:fixed-iter}.
In fact, the author still conjectures that cosmic convergence usually holds.
Finding general conditions
that ensure cosmic convergence
is an interesting future direction.

One potentially
interesting and fruitful
approach may be to focus on firmly
non-expansive operators.
Many operators that arise from convex optimization are not just non-expansive
but are furthermore firmly non-expansive.
At the same time, however, 
firm non-expansivity 
will not make the question of cosmic convergence trivial.
After all,
the counter examples of Section~\ref{s:count1}
and \ref{s:count2}
are firmly non-expansive operators.
Also it does not seem like
we can strengthen Theorem~\ref{thm:sep-hyp}
by assuming firm non-expansivity.

\end{document}